\def\margin_comment#1{\marginpar{\sffamily{\tiny #1\par}\normalfont}}
\date{}
 \newtheorem{thm}{Theorem}[section]
 \numberwithin{equation}{section} 
 \numberwithin{figure}{section} 
 \theoremstyle{plain}
  \newtheorem*{thm*}{Theorem}
 \theoremstyle{definition}
\theoremstyle{plain}
 \newtheorem*{defn*}{Definition}
 \theoremstyle{plain}
 \newtheorem{prop}[thm]{Proposition} 
 \theoremstyle{remark}
 \newtheorem{ex}[thm]{Example}
 \theoremstyle{remark}
 \newtheorem*{ex*}{Example}
 \theoremstyle{remark}
\theoremstyle{plain}
 \newtheorem{claim}[thm]{Claim}
\theoremstyle{plain}
 \theoremstyle{plain}
 \newtheorem{lem}[thm]{Lemma} 
 \theoremstyle{definition}
 \newtheorem{defn}[thm]{Definition}
\newtheorem*{acknowledgment}{Acknowledgment}
{\catcode`\|=\active
  \gdef\Pres#1{\left\langle\:{\mathcode`\|"8000\let|\SetVert
#1}\:\right\rangle}}
\def\SetVert{\egroup\;\middle|\;\bgroup}
\begin{document}
\title{The conjugacy problem in semigroups and monoids}
\author{Fabienne Chouraqui}
\maketitle
\begin{abstract}
We present an algorithmic approach to the conjugacy problems in  monoids and semigroups, using rewriting systems. There is a  class of monoids and semigroups that satisfy the condition that
the transposition problem and the left and right conjugacy problem are equivalent.
 The free monoid and the completely simple semigroups belong to this class. We give a solution to the conjugacy problem for monoids and semigroups
  in this class that are presented by a complete rewriting system that satisfies some additional conditions.
   \end{abstract}

\section{Introduction}
The use of string rewriting systems or Thue systems has been proved to be a very efficient tool to solve the word problem. Indeed, Book shows that there is a linear-time algorithm to decide the word problem for a monoid that is defined by a finite and complete rewriting system \cite{book_linear}.
A question that arises naturally is whether the use of rewriting systems may be an efficient tool for solving other decision problems, specifically  the conjugacy problem. Several authors have studied this question, see \cite{naren_otto2,naren_otto}, \cite{otto}, and \cite{pedersen}. The complexity of this question is due to some  facts, one point is that for monoids the conjugacy problem and the word problem are independent one of another \cite{otto}, this is different from the situation for groups.
Another point  is that in  semigroups and monoids,  there are several different notions of conjugacy that are not equivalent in general. We describe them in the following.\\
Let $M$ be a monoid (or a semigroup)  generated by $\Sigma$ and let $u$ and $v$ be two words in  the free monoid $\Sigma^{*}$. The right conjugacy problem asks  if there  is  a word  $x$ in the free monoid $\Sigma^{*}$ such that $xv=_{M}ux$, and  is  denoted by $\operatorname{RConj}$. The left conjugacy problem asks if there is a word  $y$  in the free monoid $\Sigma^{*}$ such that  $vy=_{M}yu$, and is denoted by $\operatorname{LConj}$. The conjunction of the left and the right conjugacy problems is denoted by $\operatorname{Conj}$. The relations $\operatorname{LConj}$ and $\operatorname{RConj}$ are reflexive and transitive but not necessarily symmetric, while $\operatorname{Conj}$ is  an equivalence relation.
A different generalization of conjugacy asks if there are   words  $x,y$  in the free monoid such that  $u=_{M}xy$ and $v=_{M}yx$. This is called the \emph{transposition problem} and it is denoted by  $\operatorname{Trans}$.
This relation is reflexive and symmetric, but not necessarily transitive.\\
 In general, if the answer to this question is positive then the answer
to the above questions is also positive, that is $\operatorname{Trans} \subseteq \operatorname{Conj} \subseteq \operatorname{LConj},\operatorname{RConj}$.  For free monoids, Lentin and Schutzenberger show that $\operatorname{Trans}= \operatorname{Conj}= \operatorname{LConj}=\operatorname{RConj}$ \cite{schutz} and  for monoids with a special presentation (that is all the relations have the form $r=1$) Zhang shows that $\operatorname{Trans}= \operatorname{RConj}$  \cite{zhang}. We denote by $\operatorname{Trans^{*}}$ the transitive closure of $\operatorname{Trans}$. Choffrut shows that  $\operatorname{Trans^{*}}= \operatorname{Conj}= \operatorname{LConj}=\operatorname{RConj}$ holds in a free inverse monoid $FIM(X)$ when restricted to the set of non-idempotents \cite{choffrut}. He shows that $\operatorname{LConj}$ is an equivalence relation on $FIM(X)$ and he proves the decidability of this problem in this case. Silva generalized the results of Choffrut to a certain class of one-relator inverse monoids. He proves the decidability of $\operatorname{Trans}$ for $FIM(X)$ with one idempotent relator \cite{silva}.\\
In this work, we use rewriting systems in order to solve  the conjugacy problems presented above in some semigroups and monoids.
A special rewriting system satisfies the condition that all the rules have the form $l \rightarrow 1$, where $l$ is any word. Otto  shows that $\operatorname{Trans}= \operatorname{Conj}= \operatorname{LConj}$
for a monoid with a special complete rewriting system and that $\operatorname{Trans}$ is an equivalence relation. Moreover,  he shows that whenever the rewriting system is finite then the conjugacy problems are solvable \cite{otto}. Narendran and Otto show that $\operatorname{LConj}$ and $\operatorname{Conj}$ are decidable for a finite, length-decreasing and complete rewriting system \cite{naren_otto} and that $\operatorname{Trans}$
is not decidable \cite{naren_otto2}. We describe our approach to solve the conjugacy problems using rewriting systems in the following.\\
    Let $M$ be the finitely presented monoid $\operatorname{Mon}\langle\Sigma\mid R\rangle$ and let $\Re$
be a complete and reduced rewriting system for $M$.
Let $u$ be a word in  $\Sigma^{*}$, we consider $u$ and all its cyclic conjugates in  $\Sigma^{*}$,   $\{u_{1}=u, u_{2},..,u_{k}\}$,  and we apply on each element $u_{i}$ rules from $\Re$ (whenever this is possible).
We say that a word $u$ is \emph{cyclically irreducible} if $u$ and all its cyclic conjugates are irreducible modulo $\Re$.
If for  some $1 \leq i \leq n$, $u_{i}$  reduces to $v$, then we say that $u$ \emph{cyclically reduces to $v$} and we denote it by $u \looparrowright v$, where $\looparrowright$ denotes a binary relation on the words in $\Sigma^{*}$. A question that arises naturally is when $u$ and all its cyclic conjugates cyclically reduce to the same cyclically irreducible element (up to cyclic conjugation in $\Sigma^{*}$), denoted by $\rho(u)$.\\
 We  define on $\looparrowright$ the properties of terminating and confluent in a very similar  way as for  $\rightarrow$ and we find that if $\looparrowright$ is terminating and confluent then each word reduces to a unique cyclically irreducible element. Moreover, we give    a partial solution to the conjugacy problems presented above in the following way:  if  $u$ and $v$ are transposed, then
   $\rho(u)$  and $\rho(v)$ are cyclic conjugates in $\Sigma^{*}$
 and this implies in turn that  $u$ and $v$ are left and right conjugates.
 So, in semigroups and monoids in which $\operatorname{Trans}= \operatorname{Conj}$, there is a solution to the conjugacy problems.  A \emph{completely simple semigroup} is a semigroup that has no non-trivial two-sided ideals and that possesses minimal one-sided ideals. Using the results of McKnight and Storey in \cite{macknight}, it holds that $\operatorname{Trans}= \operatorname{Conj}$ in a completely simple semigroup.
 So, in the case of completely simple semigroups and monoids with a  finite special complete rewriting system, our result gives a solution to the conjugacy problems,  whenever $\looparrowright$ is terminating and confluent.\\
 The paper is organized as follows. In Section $2$, we define the  binary relation $\looparrowright$ on the words in $\Sigma^{*}$ such that,  for $u,v$ in $\Sigma^{*}$,  $u\looparrowright v$ if $u$ cyclically reduces to $v$. We  define on $\looparrowright$ the properties of terminating and confluent in a very similar way as for  $\rightarrow$ and we show that if $\looparrowright$ is terminating and confluent then each word reduces to  a  unique cyclically irreducible element. In Section $3$, we establish the connection between a terminating and confluent relation $\looparrowright$ and the conjugacy problems. We show that,  for $u,v$ in $\Sigma^{*}$,  if $u$ and $v$ are transposed then they have the same cyclically irreducible form (up to conjugacy in  $\Sigma^{*}$) and this implies in turn that $u$  and $v$
are left and right conjugates. \\
In Section $3$, we adopt a kind of local approach as it is very difficult to decide wether a relation $\looparrowright$ is terminating, we define there the notion of triple that is $\widetilde{c}$-defined. In Section $5$, we give a necessary condition for the confluence of $\looparrowright$, given that it terminates. In Section $6$, using the results from Section $5$, we give an algorithm of cyclical completion that is very much inspired by the Knuth-Bendix algorithm of completion. Given a terminating relation  $\looparrowright$, if it is not confluent then some new cyclical reductions are added in order to obtain an equivalent relation $\looparrowright^{+}$ that is terminating and confluent. At last, in Section $7$, we address the case of length-preserving rewriting systems. All along this paper, $\Re$ denotes a complete and reduced rewriting system, not necessarily a finite one.

\begin{acknowledgment}
This work is a part of  the author's PhD research, done  at the Technion under
the supervision of Professor Arye Juhasz.  I am very grateful to
Professor Arye Juhasz, for his patience, his encouragement and his many helpful remarks.
I am also grateful to Professor Stuart Margolis for his comments on this result.
\end{acknowledgment}
\section{Definition of the relation $\looparrowright$}

Let $\Sigma$ be a non-empty set. We denote by $\Sigma^{*}$ the free monoid
generated by $\Sigma$; elements of $\Sigma^{*}$ are finite sequences
called \emph{words} and the empty word will be denoted by 1.
A \emph{rewriting system} $\Re$ on $\Sigma$ is a set of ordered
pairs in $\Sigma^{*}\times \Sigma^{*}$.
If $(l,r) \in \Re$ then for any words $u$ and $v$ in $\Sigma^{*}$,
 we say that the word $ulv$ \emph{reduces} to the word $urv$ and we
write $ulv\rightarrow urv$ . A word $w$ is said to be \emph{reducible}
if there is a word $z$ such that $w\rightarrow z$. If there is no
such $z$ we call $w$ \emph{irreducible}. A rewriting system $\Re$ is called \emph{terminating} \emph{(or
Noetherian)} if there is no infinite sequence of reductions
$w_{1}\rightarrow w_{2}\rightarrow...\rightarrow w_{n}\rightarrow...$.
We denote by  ``$\rightarrow^{*}$''  the reflexive
transitive closure of the relation ``$\rightarrow$''.
A rewriting system $\Re$ is called \emph{confluent} if for any words $u,v,w$ in $\Sigma^{*}$ ,
$w\rightarrow^{*}u$ and $w\rightarrow^{*}v$ implies that there is
a word $z$ in $\Sigma^{*}$ such that $u\rightarrow^{*}z$ and
$v\rightarrow^{*}z$ (that is if $u$ and $v$ have a common ancestor
then they have a common descendant). A rewriting system $\Re$ is called \emph{complete (or convergent)} if $\Re$ is
terminating and confluent. If $\Re$ is complete then every
word $w$ in $\Sigma^{*}$ has a unique irreducible equivalent word
that is called the \emph{normal form} of $w$.
We say that $\Re$ is \emph{reduced} if for any rule
$l\rightarrow r$ in $\Re$ , $r$ is irreducible and there is no
rule $l'\rightarrow r'$ in $\Re$ such that $l'$ is a subword of
$l$. If $\Re$ is complete then there exists a reduced and complete
rewriting system $\Re$'  which is equivalent to $\Re$ \cite{squier}.
We refer the reader to \cite{book,sims} for more details.\\
Let $\operatorname{Mon}\langle \Sigma \mid R \rangle$ be a finitely presented monoid $M$ and let $\Re$
be a complete rewriting system for $M$. Let $u$ and $v$ be elements in $\Sigma^{*}$. We define the following binary relation
$ u \circlearrowleft^{1}v$ if  $v$ is a cyclic conjugate of $u$ obtained by moving the first letter of $u$ to be the last
letter of $v$. We define $u\circlearrowleft^{i} v$ if $v$ is a cyclic conjugate of $u$ obtained from $i$
successive applications of $\circlearrowleft^{1}$.
We allow $i$ being $0$ and in this case  if $u\circlearrowleft^{0} v$ then $v=u$
 in the free monoid $\Sigma^{*}$. As an example, let $u$ be the word $abcdef$ in
$\Sigma^{*}$. If $u\circlearrowleft^{1} v$ and
$u\circlearrowleft^{4} w$, then $v$
is the word $bcdefa$ and $w$ is the word $efabcd$ in $\Sigma^{*}$.\\

We now translate the operation of taking cyclic conjugates and reducing them using the rewriting system $\Re$ in terms of a binary relation. We  say that \emph{$u$ cyclically reduces to $v$} and we write
\begin{gather}
u\looparrowright v
\end{gather} if there is a sequence
\begin{gather}
u\circlearrowleft^{i} \widetilde{u}\rightarrow v
\end{gather}
From its definition, the relation $\looparrowright$ is \emph{not} compatible with concatenation. We define by $\looparrowright^{*}$ the reflexive and transitive closure of $\looparrowright$, that is $u\looparrowright^{*} v$ if there is a sequence $u\looparrowright u_{1}\looparrowright
u_{2}\looparrowright...u_{k-1}\looparrowright v$. We call such a sequence \emph{a sequence of cyclical reductions}. We say that a sequence of cyclical reductions is \emph{trivial} if it has the following form:
$u\circlearrowleft^{i} u_{1}\rightarrow^{*}u_{1}\circlearrowleft^{j} u_{2}\rightarrow^{*}u_{2}\circlearrowleft^{k}..$. Of course, we are interested only in non-trivial sequences. We use the following notation:\\
- $\widetilde{u}$ denotes a cyclic conjugate of $u$ in the free monoid $\Sigma^{*}$.\\
- $u\bumpeq v$ if $u$ and $v$ are cyclic conjugates in the free monoid $\Sigma^{*}$.\\
- $u =_{M} v$ if the words $u$ and $v$ are equal as elements in $M$.\\
- $u = v$ if the words $u$ and $v$ are equal in the free monoid $\Sigma^{*}$.

   Now, we define the properties of terminating and confluent for $\looparrowright$ in a very similar way as it is done for $\rightarrow$, and we define the cyclically irreducible form of a word.
\begin{defn}
We say that $\Re$ is \emph{cyclically terminating} or that the
relation $\looparrowright$ is \emph{terminating} if there is no (non-trivial)
infinite sequence of cyclical reductions, that is  there is no infinite sequence
$u_{1}\looparrowright u_{2}\looparrowright...u_{n}\looparrowright...$
\end{defn}
\begin{ex}\label{ex_not_termin_no_cyc_irred}
Let $\Re=\{ ab\rightarrow bc,cd\rightarrow da\}$,  $\Re$ is a complete and finite  rewriting system.
Let consider the word $bcd$, then we have  $bcd\rightarrow
bda\circlearrowleft^{2} abd\rightarrow bcd \rightarrow..$, that is there is an infinite sequence of cyclical reductions. So, $\Re$ is not
cyclically terminating.
\end{ex}

\begin{defn}
We say that a word $u$ is \emph{cyclically irreducible} if $u$ and all its cyclic conjugates are irreducible modulo $\Re$, that is  there is no $v$ in $\Sigma^{*}$ such that
$u\looparrowright v$ (unless $u \bumpeq v$). We define a \emph{cyclically irreducible
form of $u$} (if it  exists) to be a
cyclically irreducible word $v$ (up to $\bumpeq$) such that
$u\looparrowright^{*}v$.
We denote by $\rho(u)$ a cyclically irreducible
form of $u$, if it exists.
\end{defn}
\begin{ex}
Let $\Re=\{ab\rightarrow bc,cd\rightarrow da\}$ as before. From Ex. \ref{ex_not_termin_no_cyc_irred},  $bcd$ does not have any cyclically irreducible form.  But, the word $acd$ has a unique cyclically irreducible form $ada$ since $acd\rightarrow ada$ and no rule from $\Re$ can be applied on $ada$ or on  any cyclic conjugate of $ada$ in $\Sigma^{*}$.
\end{ex}
\begin{lem}
\label{lem:term_canon} If $\Re$ is cyclically terminating, then
each word in $\Sigma^{*}$ has at least one cyclically irreducible form.
\end{lem}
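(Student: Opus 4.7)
The plan is a direct adaptation of the classical argument that a terminating rewriting system produces normal forms, with $\to$ replaced by $\looparrowright$ and equality in $\Sigma^{*}$ replaced by $\bumpeq$. Given an arbitrary $u\in\Sigma^{*}$, I construct a chain $u=u_{1}\looparrowright u_{2}\looparrowright\cdots$ as follows: at each stage, if $u_{i}$ is cyclically irreducible I halt and set $\rho(u)=u_{i}$; otherwise the negation of the definition of cyclical irreducibility supplies some $v\in\Sigma^{*}$ with $u_{i}\looparrowright v$ and $u_{i}\not\bumpeq v$, and I take $u_{i+1}=v$.

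The core of the proof is to show this procedure halts. For this I would invoke the hypothesis that $\Re$ is cyclically terminating, after verifying that the constructed chain is non-trivial in the sense of the paper. A trivial sequence of cyclical reductions has the shape $u\circlearrowleft^{i_{1}}u_{1}\to^{*}u_{1}\circlearrowleft^{i_{2}}u_{2}\to^{*}u_{2}\circlearrowleft^{i_{3}}\cdots$ in which each $u_{k+1}$ is obtained from $u_{k}$ by cyclic shifts alone, so that $u_{k}\bumpeq u_{k+1}$. Because my chain is built so that $u_{i}\not\bumpeq u_{i+1}$ at every step, it cannot have this form; cyclic termination therefore forbids it from being infinite. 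The last term of the (now finite) chain is cyclically irreducible by construction and satisfies $u\looparrowright^{*}\rho(u)$, as required.

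The only delicate point is justifying the strict step $u_{i}\looparrowright u_{i+1}$ with $u_{i}\not\bumpeq u_{i+1}$ whenever $u_{i}$ fails to be cyclically irreducible; this is exactly the content of the parenthetical ``(unless $u\bumpeq v$)'' in the definition. Once that phrasing is unpacked, the remainder is a routine well-foundedness argument, and no further ingredient beyond the termination hypothesis is needed.
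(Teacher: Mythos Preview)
Your proposal is correct and follows essentially the same approach as the paper: build a maximal $\looparrowright$-chain from $u$, invoke cyclic termination to conclude it is finite, and take its last term as $\rho(u)$. The paper's proof is a two-line version of this; your added care in ensuring $u_{i}\not\bumpeq u_{i+1}$ so that the chain is non-trivial in the paper's sense is a reasonable elaboration, but not a different idea.
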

\begin{proof}
Let $u$ be a word in $\Sigma^{*}$. Since $\Re$ is cyclically
terminating any sequence of cyclical reductions terminate, that is
$u\looparrowright u_{1}\looparrowright u_{2}...\looparrowright
u_{k}$. So, $u_{k}$ is a cyclically irreducible form of $u$.
\end{proof}
\begin{defn}
 We say that $\Re$ is \emph{cyclically confluent}
 or that the relation $\looparrowright$ is
\emph{confluent} if for any words $u,v,w$ in $\Sigma^{*}$,
$w\looparrowright^{*}u$ and $w\looparrowright^{*}v$ implies that
there exist cyclically conjugates  words $z$  and $z'$ in $\Sigma^{*}$ such that
$u\looparrowright^{*}z$ and $v\looparrowright^{*}z'$.
We say that $\Re$ is \emph{locally cyclically confluent}
 or that the relation $\looparrowright$ is
\emph{locally confluent} if for any words $u,v,w$ in $\Sigma^{*}$,
$w\looparrowright u$ and $w\looparrowright v$ implies that
there exist cyclically conjugates  words $z$  and $z'$ in $\Sigma^{*}$ such that
$u\looparrowright^{*}z$ and $v\looparrowright^{*}z'$, where $z\bumpeq z'$.
\end{defn}
We have the following equivalence between the local confluence of $\looparrowright$ and the  confluence of $\looparrowright$, given that $\looparrowright$ is terminating. We omit the proof as it is completely similar to the proof of the equivalence between the local confluence of $\rightarrow$ and the  confluence of $\rightarrow$, given that $\rightarrow$ is terminating and we refer the reader to \cite{book}.
\begin{claim}\label{claim_local_conf}
Let $\Re$ be a complete and reduced rewriting system and assume that $\Re$ is cyclically terminating. Then $\Re$ is cyclically confluent if and only if $\Re$ is locally cyclically confluent.
\end{claim}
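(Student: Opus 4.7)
The plan is to adapt the classical Newman lemma argument to the cyclic setting. The easy direction, cyclic confluence $\Rightarrow$ local cyclic confluence, is immediate from the definitions. For the substantive direction I would run Noetherian induction on the terminating relation $\looparrowright$: termination makes the strict relation $w > w' \iff w \looparrowright^{+} w'$ well-founded, so I may assume the full confluence statement at every proper $\looparrowright$-descendant of $w$ as inductive hypothesis and prove it at $w$.

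Before entering the induction I would record a compatibility lemma: if $z \bumpeq z'$ and $z \looparrowright y$, then $z' \looparrowright y$. This is immediate from the definition of $\looparrowright$ as $z \circlearrowleft^{i} \widetilde{z} \to y$, since every cyclic conjugate of $z$ is also a cyclic conjugate of $z'$, so only the index $i$ needs to be readjusted. Iterating, $\looparrowright^{*}$ is preserved under replacement of the source by a $\bumpeq$-equivalent word. This lemma is the mechanism by which one glues sub-diagrams whose closing vertices are only cyclically equal rather than literally equal.

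With the lemma in hand, and given $w \looparrowright^{*} u$ and $w \looparrowright^{*} v$, I first dispose of the degenerate cases $u = w$ or $v = w$, and otherwise factor $w \looparrowright u_{1} \looparrowright^{*} u$ and $w \looparrowright v_{1} \looparrowright^{*} v$. Local cyclic confluence at $w$ yields $z_{1} \bumpeq z_{1}'$ with $u_{1} \looparrowright^{*} z_{1}$ and $v_{1} \looparrowright^{*} z_{1}'$. Applying the inductive hypothesis at $u_{1}$ to the pair $(u, z_{1})$ produces $z_{2} \bumpeq z_{2}'$ with $u \looparrowright^{*} z_{2}$ and $z_{1} \looparrowright^{*} z_{2}'$; compatibility applied to $v_{1} \looparrowright^{*} z_{1}' \bumpeq z_{1} \looparrowright^{*} z_{2}'$ upgrades this to $v_{1} \looparrowright^{*} z_{2}'$. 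A second application of the inductive hypothesis, at $v_{1}$ to the pair $(v, z_{2}')$, produces $z_{3} \bumpeq z_{3}'$ with $v \looparrowright^{*} z_{3}$ and $z_{2}' \looparrowright^{*} z_{3}'$; a final invocation of compatibility turns $u \looparrowright^{*} z_{2} \bumpeq z_{2}' \looparrowright^{*} z_{3}'$ into $u \looparrowright^{*} z_{3}'$, and the pair $(z_{3}', z_{3})$ closes the diagram.

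The main obstacle I anticipate is purely bookkeeping: tracking which words in the growing web of $\bumpeq$-pairs remain reachable from $u$ and from $v$, and parsing the inductive hypothesis correctly, since it is indexed by $w$ but must range over \emph{all} pairs of its descendants. The geometric structure is exactly that of Newman's proof, with literal diamonds replaced by lozenges modulo $\bumpeq$, and no idea beyond the compatibility lemma is required.
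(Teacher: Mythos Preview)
Your proposal is correct and follows exactly the approach the paper intends: the paper omits the proof entirely, stating only that it is ``completely similar'' to the classical Newman lemma for $\rightarrow$ and referring to \cite{book}. Your explicit isolation of the compatibility lemma (that $\looparrowright$-sources may be replaced by $\bumpeq$-equivalents) is precisely the bookkeeping needed to make the Newman diamond argument go through modulo $\bumpeq$, and the rest is the standard Noetherian induction.
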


\begin{ex}\label{ex_not_cyc_confluent}
In \cite{hermiller+meier},  Hermiller and Meier construct a finite and complete rewriting system for the group  $\operatorname{Gp} \langle a,b \mid aba=bab \rangle$, using another set of generators. For the monoid with the same presentation,  the  set of generators is: $\{a,b,\underline{ab},\underline{ba},\Delta=\underline{aba}\}$ and the complete and finite rewriting system is $\Re=\{ ab \rightarrow \underline{ab},ba\rightarrow\underline{ba},
a\underline{ba}\rightarrow \Delta, \underline{ab}a\rightarrow \Delta, b\underline{ab}\rightarrow \Delta, \underline{ab}\,\underline{ab}\rightarrow a \Delta, \underline{ba}b\rightarrow \Delta,
\underline{ba}\,\underline{ba}\rightarrow b\Delta, \Delta a\rightarrow b\Delta, \Delta b\rightarrow a\Delta, \Delta \underline{ab}\rightarrow \underline{ba}\Delta,\Delta\underline{ba}\rightarrow \underline{ab}\Delta
\}$. Let consider  the word $ab$, then  $ab\rightarrow \underline{ab}$ and $ab\circlearrowleft^{1}ba\rightarrow \underline{ba}$. That is, $ab\looparrowright \underline{ab}$ and $ab\looparrowright \underline{ba}$, where both $\underline{ab}$ and $\underline{ba}$ are cyclically irreducible, so $\Re$ is not cyclically confluent (nor locally cyclically confluent).
\end{ex}
\begin{lem}
\label{lem:conf_canon}
For any word in $\Sigma^{*}$, the cyclical confluence of $\Re$
ensures the existence of at most one cyclically irreducible form (up to $\bumpeq$).
\end{lem}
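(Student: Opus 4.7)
The plan is to take the contrapositive-style approach: assume $u\in\Sigma^{*}$ has two cyclically irreducible forms $z_1$ and $z_2$, and deduce $z_1\bumpeq z_2$. By the definition of a cyclically irreducible form we have $u\looparrowright^{*}z_1$ and $u\looparrowright^{*}z_2$, so the hypothesis of cyclical confluence applies with $w=u$. It produces words $z,z'\in\Sigma^{*}$ with $z\bumpeq z'$ such that $z_1\looparrowright^{*}z$ and $z_2\looparrowright^{*}z'$.

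The heart of the argument is then the following auxiliary observation: if $w$ is cyclically irreducible and $w\looparrowright^{*}w''$, then $w\bumpeq w''$. This rests on two small facts. First, cyclical irreducibility is a $\bumpeq$-invariant property, because two $\bumpeq$-equivalent words have exactly the same set of cyclic conjugates in $\Sigma^{*}$. Second, a single step $w\looparrowright w''$ requires, by Definition, a cyclic conjugate $\widetilde{w}$ of $w$ and a rewrite $\widetilde{w}\rightarrow w''$; but if $w$ is cyclically irreducible, then $\widetilde{w}$ is irreducible, so no such rewrite exists unless the step is trivial, i.e.\ $w\bumpeq w''$. Proceeding by induction on the length of a $\looparrowright$-chain emanating from a cyclically irreducible word, each intermediate word remains cyclically irreducible and $\bumpeq$-equivalent to the starting word. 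Applied to the chains $z_1\looparrowright^{*}z$ and $z_2\looparrowright^{*}z'$, this yields $z\bumpeq z_1$ and $z'\bumpeq z_2$.

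Finally, $\bumpeq$ is an equivalence relation on $\Sigma^{*}$, so chaining $z_1\bumpeq z\bumpeq z'\bumpeq z_2$ gives $z_1\bumpeq z_2$, which is exactly the uniqueness claimed (up to $\bumpeq$).

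The only real subtlety — and thus the main, though mild, obstacle — is the auxiliary observation: one has to make sure the definitions of $\looparrowright$ and of ``cyclically irreducible'' are read consistently, so that a cyclically irreducible word admits only trivial (i.e.\ $\bumpeq$-preserving) outgoing $\looparrowright$-steps. Once that is nailed down, the rest is a direct diagram chase using cyclical confluence plus transitivity of $\bumpeq$.
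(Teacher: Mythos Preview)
Your proof is correct and follows essentially the same approach as the paper: apply cyclical confluence to the two chains $u\looparrowright^{*}z_1$ and $u\looparrowright^{*}z_2$, then use that a cyclically irreducible word admits only $\bumpeq$-preserving $\looparrowright$-steps. The paper's proof is terser and leaves your auxiliary observation implicit (it follows directly from the definition of cyclically irreducible, which already states that $u\looparrowright v$ forces $u\bumpeq v$), but the logic is the same.
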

\begin{proof}
Let $w$ be a word in $\Sigma^{*}$ such that  $w\looparrowright^{*}u$ and $w\looparrowright^{*}v$,
then from the cyclically confluence of $\Re$, we have that there
is a word $z$ in $\Sigma^{*}$ such that $u$ and $v$ cyclically
reduce to $z$ and $z'$ respectively, where $z \bumpeq z'$. So, $w$ has at most one cyclically irreducible form (up to $\bumpeq$).
\end{proof}
\begin{defn}
 $\Re$ is called \emph{cyclically complete} if $\Re$ is cyclically terminating
and cyclically confluent.
\end{defn}
Lemmas \ref{lem:term_canon} and \ref{lem:conf_canon} have the following direct consequence.
\begin{prop}
\label{claim_cyc_complete_uniq_irred}
If $\Re$ is cyclically complete, then any word $w$ in $\Sigma^{*}$ has a unique cyclically irreducible form. Moreover, if $\widetilde{w} \bumpeq w$, then $w$ and $\widetilde{w}$ have the same cyclically irreducible
form (up to $\bumpeq$).
\end{prop}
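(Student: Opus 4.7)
The plan is to assemble the proposition from the two preceding lemmas plus one short invariance observation about $\looparrowright$ under cyclic conjugation of the source.

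First I would dispose of the existence and uniqueness of $\rho(w)$ by simply quoting what is already in hand. Lemma \ref{lem:term_canon} says that cyclical termination of $\Re$ forces every word to admit at least one cyclically irreducible form; Lemma \ref{lem:conf_canon} says that cyclical confluence forces any two such forms of a given word to be $\bumpeq$-related. Taken together, they yield a unique cyclically irreducible form (up to $\bumpeq$) for every $w \in \Sigma^{*}$, which is the first assertion.

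For the second assertion I would prove a left-invariance property of $\looparrowright$: if $u \bumpeq \widetilde{u}$ and $u \looparrowright v$, then also $\widetilde{u} \looparrowright v$. This is immediate from unpacking definitions. The hypothesis $u \looparrowright v$ means there exists a cyclic conjugate $u'$ of $u$ with $u \circlearrowleft^{i} u' \rightarrow v$ for some $i \ge 0$. Since $\widetilde{u}$ is a cyclic conjugate of $u$, it is also a cyclic conjugate of $u'$, so $\widetilde{u} \circlearrowleft^{j} u'$ for some $j \ge 0$, and the concatenation $\widetilde{u} \circlearrowleft^{j} u' \rightarrow v$ witnesses $\widetilde{u} \looparrowright v$. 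Iterating along a sequence of cyclical reductions upgrades this to: $u \bumpeq \widetilde{u}$ together with $u \looparrowright^{*} v$ imply $\widetilde{u} \looparrowright^{*} v$.

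Now I would apply this invariance with $u = w$, $\widetilde{u} = \widetilde{w}$, and $v = \rho(w)$. Since $w \looparrowright^{*} \rho(w)$, we obtain $\widetilde{w} \looparrowright^{*} \rho(w)$, which shows that $\rho(w)$ is a cyclically irreducible form of $\widetilde{w}$. The uniqueness established in the first part then forces $\rho(w) \bumpeq \rho(\widetilde{w})$, completing the proof. There is no real obstacle here: the only point that requires any care is keeping straight that cyclic conjugacy is an equivalence relation, so that the cycling step $\widetilde{u} \circlearrowleft^{j} u'$ used in the invariance observation is always available for some $j$.
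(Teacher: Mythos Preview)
Your argument is correct and matches the paper's approach: the paper simply states that the proposition is a direct consequence of Lemmas \ref{lem:term_canon} and \ref{lem:conf_canon}, without writing out any details. You have supplied exactly those details, including the invariance observation for the second assertion that the paper leaves entirely implicit; nothing further is needed.
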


\section{The relation $\looparrowright$ and the conjugacy problems}
Let $M$ denote the finitely  presented monoid  $\operatorname{Mon} \langle \Sigma\mid R \rangle$ and assume $M$ has a complete and reduced rewriting system $\Re$. Whenever $\Re$ is cyclically complete, we  solve  partially the transposition problem and  the left and right conjugacy problems  in the following sense: we show that, given two words $u$ and $v$ in $\Sigma^{*}$, if $u$ and $v$ are transposed then they have the same cyclically irreducible form (up to $\bumpeq$) and this implies in turn  that $u$ and $v$ are left and right conjugates. We give examples that show that the converses are not necessarily true.
Note that given words $u$ and $v$ if we write $u\looparrowright v$ or $u\looparrowright^{*} v$, we assume
implicitly that this is done in a finite number of steps.
 We denote by
$u\equiv_{M} v$ the following equivalence relation: there are words $x,y$ in $\Sigma^{*}$ such that $ux=_{M} xv$ and $yu=_{M}vy$, that is  $u$ and $v$ are left and right conjugates.
We describe in the following lemmas the connection between the relation  $\looparrowright$ and the conjugacy problems.
 \begin{lem}\label{lem_uloopvimplies_transp}
 Let $u$ and $v$ be words in $\Sigma^{*}$ such that $u\looparrowright^{*}
v$ and $u \neq_{M} v$. Assume that the sequence of cyclical reductions has the following form:
$u\circlearrowleft^{i} \widetilde{u} \rightarrow^{*}v$. Then $u$ and $v$ are transposed.
\end{lem}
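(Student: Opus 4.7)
The plan is to unpack the definition of $\circlearrowleft^{i}$ and observe that the transposition witnesses can be read off directly from the cyclic conjugation, with the reduction step absorbed by the $=_{M}$ relation.

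First, I would write $u$ as a word in $\Sigma^{*}$ in the form $u = xy$, where $x$ consists of the first $i$ letters of $u$ and $y$ consists of the remaining letters. By the definition of $\circlearrowleft^{1}$, which moves the first letter of a word to the end, iterating $i$ times yields $\widetilde{u} = yx$ as words in $\Sigma^{*}$. In particular $u =_{M} xy$ (since this is already an equality in $\Sigma^{*}$), so the first half of the transposition condition is free.

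Next, I would use the fact that $\rightarrow^{*}$ is contained in $=_{M}$: every rewriting rule $l \rightarrow r$ is a consequence of a defining relation in $R$, hence $\widetilde{u} \rightarrow^{*} v$ implies $\widetilde{u} =_{M} v$. Combining this with $\widetilde{u} = yx$ gives $v =_{M} yx$, which is the second half of the transposition condition. Therefore the pair $(x,y)$ witnesses $u \operatorname{Trans} v$.

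There is no real obstacle here; the lemma is essentially definitional once one sees that a single cyclical reduction step $u \circlearrowleft^{i} \widetilde{u} \rightarrow^{*} v$ splits $u$ into a prefix/suffix decomposition $u = xy$ with $\widetilde{u} = yx$, and $\rightarrow^{*}$ only changes the representative modulo $M$. The hypothesis $u \neq_{M} v$ plays no role in the argument itself; it only serves to exclude the trivial case $i = 0$ (in which $\widetilde{u} = u$ and $v$ is merely a reduction of $u$ in $M$), ensuring that the transposition expressed by $(x,y)$ is non-trivial.
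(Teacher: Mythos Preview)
Your proof is correct and follows essentially the same approach as the paper: decompose $u=xy$ so that $\widetilde{u}=yx$, then use $\widetilde{u}\rightarrow^{*}v$ to get $v=_{M}yx$, exhibiting $(x,y)$ as transposition witnesses. The paper phrases the first step slightly more abstractly (using $u\bumpeq\widetilde{u}$ to obtain $x,y$ rather than reading them off from the first $i$ letters), which has the minor advantage of not needing to worry about the case $i>\ell(u)$, but the argument is the same.
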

\begin{proof}
 If $u$ and $\widetilde{u}$ are the same, then $u=_{M}v$. Otherwise,
  $u\bumpeq \widetilde{u}$ (not trivially), that is there are words $x,y$ in
 $\Sigma^{*}$ such that $u=xy$ and $ \widetilde{u}=yx$ in
 $\Sigma^{*}$. Then $v=_{M} \widetilde{u}=yx$, that is $u$ and $v$ are transposed.
 \end{proof}

\begin{lem}\label{lem_uloopvimplies conj}
 Let $u$ and $v$ be words in $\Sigma^{*}$ such that $u\looparrowright^{*}
v$. Then $u \equiv_{M} v$.
\end{lem}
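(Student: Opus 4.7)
The plan is to handle one cyclical-reduction step $u\looparrowright v$ first and then extend to arbitrary $u\looparrowright^{*}v$ by composing the conjugators along the chain.

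For a single step, unpack the definition of $\looparrowright$: there exist an index $i\ge 0$ and a word $\widetilde u$ with $u\circlearrowleft^{i}\widetilde u\to v$. The cyclic shift provides a factorization in the \emph{free} monoid $u=xy$ with $\widetilde u=yx$ (allowing the degenerate case $y=1$, i.e.\ $i=0$), while the rewriting step yields $v=_{M}\widetilde u=yx$. I claim that $\alpha:=x$ witnesses right conjugacy and $\beta:=y$ witnesses left conjugacy. Indeed, using only associativity of concatenation in $\Sigma^{*}$ together with $v=_{M}yx$,
\[
u\alpha \;=\; xy\cdot x \;=\; x\cdot yx \;=_{M}\; x\cdot v \;=\; \alpha v,\qquad \beta u \;=\; y\cdot xy \;=\; yx\cdot y \;=_{M}\; v\cdot y \;=\; v\beta,
\]
which is exactly the definition of $u\equiv_{M}v$.

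For the general case, write $u=u_{0}\looparrowright u_{1}\looparrowright\cdots\looparrowright u_{k}=v$ and apply the previous paragraph to each link to obtain $x_{j},y_{j}$ with $u_{j-1}x_{j}=_{M}x_{j}u_{j}$ and $y_{j}u_{j-1}=_{M}u_{j}y_{j}$. Set $\alpha=x_{1}x_{2}\cdots x_{k}$ and $\beta=y_{k}y_{k-1}\cdots y_{1}$; a telescoping computation
\[
u\alpha \;=\; u_{0}x_{1}x_{2}\cdots x_{k}\;=_{M}\;x_{1}u_{1}x_{2}\cdots x_{k}\;=_{M}\;\cdots\;=_{M}\;x_{1}\cdots x_{k}u_{k}\;=\;\alpha v
\]
and the symmetric computation $\beta u=_{M}v\beta$ furnish the desired simultaneous right and left conjugators from $u$ to $v$, so $u\equiv_{M}v$. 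Alternatively, once the single-step case is established, the transitivity of $\operatorname{LConj}$ and $\operatorname{RConj}$ recorded in the introduction yields the conclusion at once.

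There is no genuine obstacle; the argument is a bookkeeping exercise. The only point to be vigilant about is that the factorization $u=xy$, $\widetilde u=yx$ coming from $\circlearrowleft^{i}$ lives in the free monoid, so the crucial identities $xy\cdot x=x\cdot yx$ and $y\cdot xy=yx\cdot y$ are pure associativity and do not invoke any rule of $\Re$; the failure of $\looparrowright$ to be compatible with concatenation, flagged after its definition, is thus not used. The degenerate cases $i=0$ (hence $\widetilde u=u$) and $y=1$ are covered uniformly by the same formulas, since they simply reduce to $u=_{M}v$ with trivial conjugators.
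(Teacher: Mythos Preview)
Your proof is correct and follows essentially the same route as the paper: show that a single cyclical-reduction step yields a transposition $u=xy$, $v=_{M}yx$ (which immediately gives both a right and a left conjugator), and then chain together along the sequence using transitivity. The only cosmetic difference is that you write out the composed conjugators $\alpha=x_{1}\cdots x_{k}$ and $\beta=y_{k}\cdots y_{1}$ explicitly, whereas the paper simply invokes Lemma~\ref{lem_uloopvimplies_transp} to get a transposition at each block and then appeals to the transitivity of $\equiv_{M}$.
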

\begin{proof}
Since $u\looparrowright^{*}
v$, there is a sequence of cyclical reductions $u=u_{1}\circlearrowleft^{i} \widetilde{u} \rightarrow^{*}u_{2}\circlearrowleft^{i} \widetilde{u_{2}} \rightarrow^{*}u_{3}... \rightarrow^{*}u_{k}=v$. From lemma \ref{lem_uloopvimplies_transp},  $u_{1}$ and $u_{2}$ are transposed, $u_{2}$ and $u_{3}$ are transposed, .., $u_{k-1}$ and $u_{k}$ are transposed. So, this implies that $u_{i} \equiv_{M} u_{i+1}$ for $1 \leq i \leq k-1$ and since $\equiv_{M}$ is transitive we have that $u \equiv_{M} v$.
\end{proof}

\begin{prop}\label{prop_samerho_conj}
Let $M$ denote the  finitely  presented monoid  $\operatorname{Mon} \langle \Sigma\mid R \rangle$ and  let $\Re$ be a complete and reduced rewriting system for $M$.
Let $u$ and $v$ be words in $\Sigma^{*}$ and assume  that they cyclically reduce to a same
cyclically irreducible form (up to $\bumpeq$), that is $\rho(u) \bumpeq \rho(v)$. Then $u \equiv_{M} v$
\end{prop}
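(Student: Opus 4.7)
The plan is to chain together the two previous lemmas with the observation that cyclic conjugates in $\Sigma^{*}$ are transposed in $M$, and then to use transitivity of $\equiv_{M}$.

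First I would apply Lemma \ref{lem_uloopvimplies conj} twice: since $u \looparrowright^{*} \rho(u)$ and $v \looparrowright^{*} \rho(v)$, I get immediately that $u \equiv_{M} \rho(u)$ and $v \equiv_{M} \rho(v)$. So the problem reduces to showing $\rho(u) \equiv_{M} \rho(v)$, and then combining via transitivity of $\equiv_{M}$.

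Next I would handle the bridge $\rho(u) \equiv_{M} \rho(v)$. By hypothesis $\rho(u) \bumpeq \rho(v)$, so by the definition of $\bumpeq$ there exist words $x, y \in \Sigma^{*}$ with $\rho(u) = xy$ and $\rho(v) = yx$ as elements of the free monoid, hence also as elements of $M$. This means $\rho(u)$ and $\rho(v)$ are transposed, and since $\operatorname{Trans} \subseteq \operatorname{Conj}$ (noted in the introduction), we have $\rho(u) \equiv_{M} \rho(v)$. Explicitly: from $\rho(u) = xy$ and $\rho(v) = yx$, the element $x$ satisfies $\rho(u) \cdot x =_{M} xy \cdot x = x \cdot yx =_{M} x \cdot \rho(v)$, giving the right-conjugation witness, and symmetrically $y$ gives the left-conjugation witness.

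Finally, stringing these together, $u \equiv_{M} \rho(u) \equiv_{M} \rho(v) \equiv_{M} v$, and since $\equiv_{M}$ is an equivalence relation, $u \equiv_{M} v$.

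There is essentially no obstacle here: every step is either a direct invocation of a previously established lemma, an unpacking of the definition of $\bumpeq$, or the use of transitivity of the conjugacy relation. The only point requiring any care is making the inclusion $\operatorname{Trans} \subseteq \operatorname{Conj}$ fully explicit by exhibiting the conjugators $x$ and $y$, which I would spell out briefly to keep the argument self-contained.
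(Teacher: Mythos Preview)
Your proof is correct and follows the same approach as the paper. The paper's proof is simply a terser version of yours: it invokes Lemma~\ref{lem_uloopvimplies conj} to get $u \equiv_{M} \rho(u)$ and $v \equiv_{M} \rho(v)$, then concludes from $\rho(u) \bumpeq \rho(v)$ and transitivity of $\equiv_{M}$, leaving the implication $\bumpeq \Rightarrow \equiv_{M}$ implicit where you spell it out explicitly via the conjugators $x$ and $y$.
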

\begin{proof}
From lemma \ref{lem_uloopvimplies conj}, $u \equiv_{M} \rho(u)$ and $v \equiv_{M} \rho(v)$. Since $\rho(u) \bumpeq \rho(v)$ and  $\equiv_{M}$ is an
equivalence relation,  $u \equiv_{M} v$.
\end{proof}
The converse is not true in general, namely  $u \equiv_{M} v$ does not imply that $\rho(u) \bumpeq \rho(v)$. Indeed, let consider the following example. Let  $\Re=\{ bab\rightarrow aba,ba^{n}ba\rightarrow aba^{2}b^{n-1}, n \geq 2\}$. Then $\Re$ is a complete and infinite  rewriting system for the braid monoid presented by
$\operatorname{Mon}\langle a,b \mid aba=bab \rangle$.
It holds that $a\equiv_{M}b$, since $a(aba)=_{M}(aba)b$ and
$(aba)a=_{M}b(aba)$, but $\rho(a)=a$ and $\rho(b)=b$ and they are not cyclic conjugates.
This example is due to Patrick Dehornoy.

\begin{lem}\label{lem_equal_same_cycl}
Let $\Re$ be a complete, reduced and cyclically complete rewriting system for $M$. Let $u$ and $v$ be words in $\Sigma^{*}$. If $u=_{M}v$, then $\rho(u)\bumpeq \rho(v)$.
\end{lem}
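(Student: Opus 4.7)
The plan is to combine completeness of $\Re$ with cyclical completeness. Since $\Re$ is complete, $u =_{M} v$ is equivalent to saying that $u$ and $v$ share a common normal form; let $w$ denote this common irreducible word, so $u \rightarrow^{*} w$ and $v \rightarrow^{*} w$. The key observation is that an ordinary reduction is a special case of a cyclical reduction: taking $i=0$ in the defining sequence $u \circlearrowleft^{i} \widetilde{u} \rightarrow v$ gives $u \circlearrowleft^{0} u \rightarrow v$, so $u \rightarrow v$ implies $u \looparrowright v$. Iterating, $u \rightarrow^{*} w$ yields $u \looparrowright^{*} w$, and similarly $v \looparrowright^{*} w$.

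Next I would invoke Proposition \ref{claim_cyc_complete_uniq_irred}: since $\Re$ is cyclically complete, every word, in particular $w$, has a unique cyclically irreducible form $\rho(w)$ up to $\bumpeq$. In particular there is some cyclically irreducible $z$ with $w \looparrowright^{*} z$ and $z \bumpeq \rho(w)$. Concatenating sequences of cyclical reductions, $u \looparrowright^{*} w \looparrowright^{*} z$ and $v \looparrowright^{*} w \looparrowright^{*} z$, so $z$ is simultaneously a cyclically irreducible form of $u$ and of $v$.

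Finally, by the uniqueness clause of Proposition \ref{claim_cyc_complete_uniq_irred} (equivalently Lemma \ref{lem:conf_canon}), any two cyclically irreducible forms of the same word are $\bumpeq$-equivalent, so $\rho(u) \bumpeq z \bumpeq \rho(v)$, which gives $\rho(u) \bumpeq \rho(v)$ as required.

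I do not anticipate a genuine obstacle here: the statement is essentially bookkeeping once one notices that $\rightarrow$ embeds into $\looparrowright$ via $i=0$. The only point that deserves verbal care is the final appeal to uniqueness, because cyclical confluence only guarantees a common \emph{cyclic conjugate} of descendants rather than a literally identical descendant; but this is exactly the ``up to $\bumpeq$'' uniqueness already established in Proposition \ref{claim_cyc_complete_uniq_irred}, so the conclusion $\rho(u) \bumpeq \rho(v)$ (and not $\rho(u) = \rho(v)$) is the correct form of the claim.
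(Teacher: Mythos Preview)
Your proposal is correct and follows essentially the same approach as the paper: both use completeness of $\Re$ to pass through the common normal form $w$ of $u$ and $v$, observe that $\rightarrow^{*}$ embeds into $\looparrowright^{*}$, and then invoke cyclical completeness to conclude uniqueness up to $\bumpeq$. The only cosmetic difference is that the paper phrases the argument by contradiction (assuming $\rho(u)\not\bumpeq\rho(v)$ and deriving a clash with cyclical confluence at $w$), whereas you argue directly; your presentation is in fact slightly cleaner.
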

\begin{proof}
Assume by contradiction that $u$ and $v$ do not have the same
cyclically irreducible form, that is  $u \looparrowright^{*} z$
and $v \looparrowright^{*} z'$, where $z,z'$ are cyclically irreducible
and not cyclic conjugates in $\Sigma^{*}$.
Since $\Re$ is a complete  rewriting system and $u=_{M}v$, there is an irreducible word $w$ such
 that $u \rightarrow ^{*} w$ and $v \rightarrow ^{*} w$. \\
  We have the following diagram:
 $\begin{array}{cccccccc}
 u & \looparrowright^{*}& z\\
 &\searrow^{*} \\
 && w \\
 & \nearrow^{*} \\
v & \looparrowright^{*}& z'
  \end{array}$\\
  Assume with no loss of generality that $w \looparrowright^{*} z$, so $v \looparrowright^{*} z$.
  But $v \looparrowright^{*} z'$ and $\Re$ is cyclically complete, so a contradiction.
   Note that if $w \looparrowright^{*} z''$, where  $z'' \neq z,z'$,  then $u \looparrowright^{*} z$ and $u \looparrowright^{*} z''$, also a contradiction.
  \end{proof}
\begin{thm}
Let $\Re$ be a complete, reduced and cyclically
complete rewriting system for $M$. Let $u$ and $v$ be words in
$\Sigma^{*}$. \\
$(i)$ If $u$ and $v$ are transposed, then $\rho(u) \bumpeq \rho(v)$.\\
$(ii)$ If $\rho(u) \bumpeq \rho(v)$, then $u \equiv_{M} v$.
\end{thm}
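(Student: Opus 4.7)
The plan is to handle the two parts separately, with (ii) being essentially a restatement of an already proved proposition, so that all the work sits in (i).

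For part (ii), I would simply invoke Proposition \ref{prop_samerho_conj}: cyclical complete\-ness is assumed, so $\rho(u) \bumpeq \rho(v)$ immediately gives $u \equiv_M v$.

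For part (i), suppose $u$ and $v$ are transposed, so there exist $x,y\in\Sigma^{*}$ with $u =_M xy$ and $v =_M yx$. The idea is to route $\rho(u)$ and $\rho(v)$ through $\rho(xy)$ and $\rho(yx)$, and then use cyclic conjugacy inside the free monoid. First, since $\Re$ is cyclically complete, Proposition \ref{claim_cyc_complete_uniq_irred} guarantees every word has a unique cyclically irreducible form up to $\bumpeq$. Next, since $u =_M xy$, Lemma \ref{lem_equal_same_cycl} yields $\rho(u) \bumpeq \rho(xy)$, and analogously $\rho(v) \bumpeq \rho(yx)$. Finally, in the free monoid we have $xy \bumpeq yx$ (by applying $\circlearrowleft^{|x|}$), and by the second assertion of Proposition \ref{claim_cyc_complete_uniq_irred}, cyclically conjugate words share the same cyclically irreducible form, so $\rho(xy) \bumpeq \rho(yx)$. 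Chaining these three $\bumpeq$-equalities and using transitivity of $\bumpeq$ gives $\rho(u) \bumpeq \rho(v)$.

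The only subtlety worth flagging is the potential trap of confusing equality in $M$ with equality in $\Sigma^{*}$: the hypothesis $u =_M xy$ does \emph{not} say $u$ and $xy$ are cyclically conjugate in the free monoid, so one cannot directly compare their cyclic conjugates without first passing through Lemma \ref{lem_equal_same_cycl}. Once that step is in place, the rest is bookkeeping with $\bumpeq$, and no further use of termination or confluence beyond what is packaged into Proposition \ref{claim_cyc_complete_uniq_irred} and Lemma \ref{lem_equal_same_cycl} is needed. This is the main (indeed, only) obstacle, and it is already resolved by the preceding lemma, so the proof should be short.
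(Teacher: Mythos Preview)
Your proposal is correct and follows essentially the same route as the paper: for (i) you use Lemma~\ref{lem_equal_same_cycl} to pass from $u,v$ to $xy,yx$ and Proposition~\ref{claim_cyc_complete_uniq_irred} to identify $\rho(xy)$ with $\rho(yx)$ via $xy\bumpeq yx$, and for (ii) you invoke Proposition~\ref{prop_samerho_conj}. The paper's proof is exactly this, in the same order and with the same references.
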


\begin{proof}
$(i)$ Since $u$ and $v$ are transposed, there are words $x$ and $y$ in
$\Sigma^{*}$ such that $u =_{M}xy$ and $v=_{M}yx$.
Since $xy \bumpeq yx$ and $\Re$ is cyclically complete, $\rho(xy) \bumpeq \rho(yx)$, from Proposition \ref{claim_cyc_complete_uniq_irred}.
From lemma \ref{lem_equal_same_cycl}, $\rho(xy) \bumpeq \rho(u)$ and the same holds for $v$ and $yx$.
So, $\rho(u) \bumpeq \rho(v)$.
$(ii)$ holds from Proposition \ref{prop_samerho_conj} in a more general context.
\end{proof}

\section{A local approach for $\looparrowright$: definition of $\operatorname{Allseq}$$(w)$}
Given a complete and reduced rewriting system $\Re$, it is a very hard task to determine if $\Re$ is cyclically terminating, since we have to check a potentially infinite number of words. So, we adopt a kind of local approach, that is for each word $w$  in $\Sigma^{*}$ we consider all the possible sequences of cyclical reductions that begin by each word from $\{w_{1},..,w_{k}\}$, where  $w_{1}=w,w_{2},..,w_{k}$ are all the cyclic conjugates of $w$ in $\Sigma^{*}$. We call the set of all these sequences \emph{$\operatorname{Allseq}$$(w)$}. We say that $\operatorname{Allseq}$$(w)$ \emph{terminates} if there is no infinite sequence of cyclical reductions in $\operatorname{Allseq}$$(w)$.
Clearly, $\Re$ is cyclically terminating if and only if $\operatorname{Allseq}$$(w)$ terminates for every $w$ in $\Sigma^{*}$. We illustrate this idea with an example.
\begin{ex}\label{ex_braid_B3}
Let $\Re=\{ bab\rightarrow aba,ba^{n}ba\rightarrow aba^{2}b^{n-1},$ where $n\geq 2\}$.
Then $\Re$ is a complete and infinite  rewriting system for the braid monoid presented by
$\operatorname{Mon}\langle a,b \mid aba=bab \rangle$.
We denote by $w$ the word $ba^{2}ba$. We have the following infinite
sequence of cyclical reductions: $ba^{2}ba \rightarrow aba^{2}b
\circlearrowleft^{1} ba^{2}ba$, that is  $\operatorname{Allseq}$$(w)$ does not terminate.
This holds also for $ba^{n}ba$ for each $n \geq 2$.
\end{ex}
 We say that  $\operatorname{Allseq}$$(w)$ \emph{converges} if a unique cyclically irreducible form is achieved in $\operatorname{Allseq}$$(w)$ (up to $\bumpeq$). Clearly, if $\Re$ is cyclically confluent then $\operatorname{Allseq}$$(w)$ converges for every $w$ in $\Sigma^{*}$. The converse is true only if $\Re$ is cyclically terminating. We illustrate this with an example.
\begin{ex}\label{ex_braid_uniquecyclicalform}
Let $\Re=\{ bab\rightarrow aba,ba^{n}ba\rightarrow aba^{2}b^{n-1},$ where $n\geq 2\}$ as in Ex. \ref{ex_braid_B3}.
 It holds that  $\operatorname{Allseq}$$(ba^{2}ba)$ does not terminate (see Ex. \ref{ex_braid_B3}). Yet, $\operatorname{Allseq}$$(ba^{2}ba)$ converges, since  $a^{3}ba$ is the unique  cyclically irreducible form achieved in $\operatorname{Allseq}$$(w)$. Indeed,  there is the following sequence of cyclical reductions:  $ba^{2}ba \circlearrowleft^{1}a^{2}bab \rightarrow a^{3}ba$ and   all the cyclic conjugates of $w$ cyclically reduce to $a^{3}ba$. So,  although $\operatorname{Allseq(ba^{2}ba)}$ does not terminate,  a unique cyclically irreducible form $a^{3}ba$ is achieved.
\end{ex}
We find a condition that ensures that $\operatorname{Allseq}$$(w)$ converges, given that $\operatorname{Allseq}$$(w)$ terminates. Before we proceed, we give the following definition.
\begin{defn}
Let $\Re$ be a complete, reduced  rewriting system and let $w$ be a word in $\Sigma^{*}$. Let $w_{1},w_{2}$ be cyclic conjugates
of $w$ in $\Sigma^{*}$ and let $r_{1}$ and $r_{2}$ be rules in $\Re$ such that $r_{1}$ can be applied on $w_{1}$ and $r_{2}$ can be applied on $w_{2}$.
We say that \emph{the triple $(w,r_{1},r_{2})$ is $\widetilde{c}$-defined} if there is a cyclic conjugate $\widetilde{w}$ of $w$ such that both rules $r_{1}$ and $r_{2}$ can be applied on
$\widetilde{w}$. We allow an empty entry in a triple $(w,r_{1},r_{2})$, that is $r_{1}$ or $r_{2}$ is empty.
\end{defn}
\begin{ex}\label{ex_def_triple}
Let $\operatorname{Mon}\langle x,y,z \mid xy=yz=zx \rangle$, this is the Wirtinger presentation of the trefoil knot group. Let $\Re= \{xy \rightarrow zx, yz \rightarrow zx, xz^{n}x  \rightarrow zxzy^{n-1}, n \geq 1\}$ be a complete and infinite rewriting system for this monoid (see \cite{chou1}). Let consider the word $yxz^{2}x$, $yxz^{2}x$ and $xyxz^{2}$ are cyclic conjugates on which the rules $xz^{2}x \rightarrow zxzy$ and $xy \rightarrow zx$ can be applied respectively. We claim that the triple $(yxz^{2}x,xz^{2}x \rightarrow zxzy, xy \rightarrow zx)$ is  $\widetilde{c}$-defined.
Indeed, there is the cyclic conjugate $xz^{2}xy$ on which both the rules $xz^{2}x \rightarrow zxzy$ and $xy \rightarrow zx$ can be applied. But, as an example the triple  $(xz^{2}xz^{3},xz^{2}x \rightarrow zxzy, xz^{3}x \rightarrow zxzy^{2})$ is not $\widetilde{c}$-defined.
\end{ex}
In what follows, we show that if  $\operatorname{Allseq}$$(w)$ terminates and all the triples occurring there are  $\widetilde{c}$-defined, then  $\operatorname{Allseq}$$(w)$  converges. The following lemma is the induction basis of the proof. For brevity, we write $u \looparrowright^{r_{1}} v_{1}$ for $u\circlearrowleft u_{1}  \rightarrow^{r_{1}} v_{1}$, where $u_{1}  \rightarrow^{r_{1}} v_{1}$ means that $v_{1}$ is obtained from the application of the rule $r_{1}$ on $u_{1}$.
\begin{lem}\label{lem_one_step}
Let $(w,r_{1},r_{2})$ be a triple and assume that $(w,r_{1},r_{2})$ is $\widetilde{c}$-defined.
 Assume that  $w \looparrowright^{r_{1}} v_{1}$ and  $w \looparrowright^{r_{2}} v_{2}$, then there are cyclically conjugates words $z_{1}$ and $z_{2}$ such that $v_{1} \looparrowright^{*} z_{1}$ and
$v_{2} \looparrowright^{*} z_{2}$.
\end{lem}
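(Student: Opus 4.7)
The plan is to use $\widetilde{c}$-definedness to lift both cyclical reductions to ordinary $\Re$-reductions issuing from a single word, close the resulting peak using the confluence of $\Re$, and then transport the common descendant back to $v_{1}$ and $v_{2}$ via a cyclic rotation.

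By $\widetilde{c}$-definedness, fix a cyclic conjugate $\widetilde{w}$ of $w$ on which both $r_{1}$ and $r_{2}$ apply, and let $y_{1},y_{2}\in\Sigma^{*}$ be the results of applying $r_{1}$ and $r_{2}$ to $\widetilde{w}$ at those occurrences. Since $\Re$ is complete, hence confluent in the ordinary sense, the peak $y_{1}\leftarrow\widetilde{w}\rightarrow y_{2}$ closes to some $y\in\Sigma^{*}$ with $y_{1}\rightarrow^{*}y$ and $y_{2}\rightarrow^{*}y$. Independently, the hypothesis $w\looparrowright^{r_{i}}v_{i}$ unpacks as $w\circlearrowleft^{j_{i}}\widetilde{w_{i}}\rightarrow^{r_{i}}v_{i}$; reading the triple $(w,r_{1},r_{2})$ together with these rule applications as picking out specific circular occurrences of the left-hand sides, the occurrence of the left-hand side of $r_{i}$ used inside $\widetilde{w_{i}}$ is the same circular occurrence as the one used inside $\widetilde{w}$. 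A direct computation then gives $v_{i}\bumpeq y_{i}$: writing $\widetilde{w_{i}}=\alpha l\beta$ and $v_{i}=\alpha t\beta$ with the rule $r_{i}$ being $l\rightarrow t$, the matching rotation of $\widetilde{w}$ reads $l\beta\alpha$, whence $y_{i}=t\beta\alpha$, which is a cyclic conjugate of $v_{i}$.

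To finish, $v_{i}\circlearrowleft^{k_{i}}y_{i}\rightarrow\cdots\rightarrow y$ is a chain of cyclical reductions (the initial cyclic shift is absorbed by the first $\rightarrow$-step, and each subsequent $\rightarrow$-step is a cyclical reduction with trivial cyclic shift), so $v_{i}\looparrowright^{*}y$ for each $i$; setting $z_{1}=z_{2}=y$ yields the desired cyclically conjugate (indeed equal) targets, the degenerate sub-case $y_{i}=y$ being handled by the reflexive closure. The main obstacle is the position-matching step in paragraph two: $\widetilde{c}$-definedness must be read as asserting that the particular circular occurrences of the left-hand sides of $r_{1}$ and $r_{2}$ used to produce $v_{1}$ and $v_{2}$ both appear as linear subwords in one and the same cyclic conjugate $\widetilde{w}$. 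Without this reading, one would have to address the fact that a single left-hand side can occur at several circular positions of $w$, and the bare confluence of $\Re$ would not suffice to pair the two rewrite sites correctly.
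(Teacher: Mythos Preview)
Your proof is correct and takes a genuinely different route from the paper. The paper argues by a case analysis on the relative position of the two left-hand sides inside the common conjugate $\widetilde{w}$: when the occurrences are disjoint (cases (i)/(ii)) it applies the two rules one after the other and exhibits the common cyclic descendant by hand; when they overlap (cases (iii)/(iv)) it invokes the ordinary confluence of $\Re$ to resolve the overlap ambiguity and then rotates. You bypass this split entirely: you rewrite $\widetilde{w}$ once with each rule, close the resulting peak $y_1\leftarrow\widetilde{w}\rightarrow y_2$ with ordinary confluence to a single $y$, and then observe that $v_i\bumpeq y_i$ so that a rotation followed by the reduction $y_i\rightarrow^{*}y$ realises $v_i\looparrowright^{*}y$. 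This is strictly more uniform---the disjoint case is absorbed into the same confluence argument rather than treated separately---and it yields the slightly stronger conclusion $z_1=z_2$ in the non-degenerate situation.

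Two small points of presentation. First, your handling of the degenerate sub-case $y_i=y$ is right in substance but phrased loosely: reflexivity gives only $v_i\looparrowright^{*}v_i$, not $v_i\looparrowright^{*}y$; what you actually do there is take $z_i=v_i$ and use $v_i\bumpeq y$ to match it with the other side. Second, your closing caveat about position-matching is well observed and honest: the paper's own proof makes exactly the same tacit identification when it writes $w_1=l''_2 sx\,l_1\,y\,l'_2$ etc., effectively reading the triple as carrying the specific circular occurrences used to produce $v_1$ and $v_2$. So this is not a defect of your argument relative to the paper; it is a shared reading of the definition of $\widetilde{c}$-defined.
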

\begin{proof}
 We denote by $l_{1}$ and $l_{2}$ the left-hand sides of the rules $r_{1}$ and $r_{2}$ respectively and  by $m _{1}$ and $m_{2}$ the corresponding right-hand sides. Then $l_{1}$ has an occurrence in $w_{1}$ and $l_{2}$ has an occurrence in $w_{2}$, where $w_{1} \bumpeq w_{2} \bumpeq w$ and we use the symbol $*$ to denote an occurrence in a word.
 Since $(w,r_{1},r_{2})$ is $\widetilde{c}$-defined,  there exists  $\widetilde{w}$ such that  $\widetilde{w} \bumpeq w$ and $l_{1}$ and $l_{2}$  both have an occurrence in $\widetilde{w}$. Then one of the following holds:\\
 $(i)$ $\widetilde{w}= x*l_{1}*y*l_{2}*s$, where $x,y,s$ are words.\\
  $(ii)$ $\widetilde{w}= x*l_{2}*y*l_{1}*s$, where $x,y,s$ are words.\\
 $(iii)$ $\widetilde{w}= x*l_{1}*l''_{2}*y$, where $x,y$ are words, $l_{1}=l'_{1}l''_{1}$, $l_{2}=l'_{2}l''_{2}$ and $l''_{1}=l'_{2}$.\\
 $(iv)$ $\widetilde{w}= x*l_{2}*l''_{1}*y$, where $x,y$ are words, $l_{1}=l'_{1}l''_{1}$, $l_{2}=l'_{2}l''_{2}$ and $l''_{2}=l'_{1}$.\\
The word $l_{1}$ cannot be a subword of $l_{2}$ (or the converse),  since $\Re$ is reduced and there is no inclusion ambiguity.
We check the cases $(i)$ and $(iii)$  and the other two cases are symmetric.
If both  $l_{1}$ and $l_{2}$ have an  occurrence in $w_{1}$ and in $w_{2}$, then obviously  there are words $z_{1}$ and $z_{2}$ such that $v_{1} \looparrowright^{*} z_{1}$ and
$v_{2} \looparrowright^{*} z_{2}$, where  $z_{1} \bumpeq z_{2}$.  So, assume that
$l_{1}$ has no occurrence in $w_{2}$ and $l_{2}$ has no occurrence in $w_{1}$.\\
       Case $(i)$: Assume that $\widetilde{w}= x*l_{1}*y*l_{2}*s$. Then the words $w_{1}$ and $w_{2}$ have the following form:
$w_{1}= l''_{2}* sx*l_{1}*y*l'_{2}$ and $w_{2}=l''_{1}* y*l_{2}*sx*l'_{1}$, where $l_{1}=l'_{1}l''_{1}$ and $l_{2}=l'_{2}l''_{2}$. This is due to the fact that  $l_{1}$ has no occurrence in $w_{2}$ and $l_{2}$ has no occurrence in $w_{1}$.  So, $w_{1}= l''_{2}* sx*l_{1}*y*l'_{2} \rightarrow l''_{2}* sx*m_{1}*y*l'_{2}\circlearrowleft^{i} sx*m_{1}*y*l'_{2}*l''_{2}\rightarrow sx*m_{1}*y*m_{2}$ and
$w_{2}=l''_{1}* y*l_{2}*sx*l'_{1} \rightarrow l''_{1}* y*m_{2}*sx*l'_{1} \circlearrowleft^{j} y*m_{2}*sx*l'_{1}*l''_{1}\rightarrow y*m_{2}*sx*m_{1}$.
We take then $z_{1}$ to be $sx*m_{1}*y*m_{2}$ and $z_{2}$ to be $y*m_{2}*sx*m_{1}$.\\
 Case $(iii)$: Assume that $\widetilde{w}= x*l_{1}*l''_{2}*y$, where $l''_{1}=l'_{2}$.
    There is an overlap ambiguity between these rules which resolve, since $\Re$ is complete:\\
    $\begin{array}{cccccccccc}
    &&l'_{1}l''_{1}l''_{2}\\
    &\swarrow &&\searrow\\
    m_{1}l''_{2} &&&& l'_{1}m_{2}\\
    &\searrow^{*}&&\swarrow^{*}\\
    &&z
    \end{array}$ \\
       The words $w_{1}$ and $w_{2}$ have the following form:
$w_{1}= l''_{2}* yx*l_{1}$
 and $w_{2}=l_{2}* yx*l'_{1}$.
 So, $w_{1}= l''_{2}* yx*l_{1} \rightarrow l''_{2}* yx*m_{1} \circlearrowleft^{i} m_{1}*l''_{2}* yx \rightarrow^{*} z*yx$
 and $w_{2}=l_{2}* yx*l'_{1}\rightarrow m_{2}* yx*l'_{1} \circlearrowleft^{j} l'_{1}*m_{2}* yx \rightarrow^{*} z*yx$.
So, we take $z_{1}$ and $z_{2}$ to be $z*yx$.\\
If we assume that both  $l_{1}$ and $l_{2}$ have an occurrence in $w_{1}$ but not in  $w_{2}$ (or the converse), then take  $\widetilde{w}$ to be  $w_{1}$ and the proof is done by a  case by case analysis of the same kind as above.
 \end{proof}
\begin{prop}\label{prop_triple_defined_converge}
Let $w$ be a word in $\Sigma^{*}$ and assume that $\operatorname{Allseq}$$(w)$ terminates.
Assume all the triples in
$\operatorname{Allseq}$$(w)$ are $\widetilde{c}$-defined, then $\operatorname{Allseq}$$(w)$ converges.
\end{prop}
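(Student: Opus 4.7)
The plan is to adapt Newman's lemma to the relation $\looparrowright$, using Lemma \ref{lem_one_step} as the local confluence input. Since $\operatorname{Allseq}(w)$ terminates by hypothesis, the strict relation $\looparrowright^{+}$ restricted to the words occurring in $\operatorname{Allseq}(w)$ is well-founded, so Noetherian induction is available. The statement I would prove by induction on such a word $u$ is: any two terminating sequences of cyclical reductions starting at $u$ end at cyclically irreducible words that are $\bumpeq$-equivalent.

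The base case is immediate, since a cyclically irreducible $u$ is its own unique cyclically irreducible form. For the inductive step, suppose two cyclically irreducible descendants of $u$ are reached as $u \looparrowright^{r_1} v_1 \looparrowright^{*} \rho_1$ and $u \looparrowright^{r_2} v_2 \looparrowright^{*} \rho_2$. The first-step rules $r_1, r_2$ produce a triple $(u, r_1, r_2)$ inside $\operatorname{Allseq}(w)$, so by hypothesis this triple is $\widetilde{c}$-defined. Lemma \ref{lem_one_step} then supplies words $z_1 \bumpeq z_2$ together with reductions $v_1 \looparrowright^{*} z_1$ and $v_2 \looparrowright^{*} z_2$. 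By Lemma \ref{lem:term_canon} pick cyclically irreducible forms $\rho(z_1)$ and $\rho(z_2)$ reached from $z_1$ and $z_2$ respectively.

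Applying the inductive hypothesis to $v_1$ (strictly below $u$) to the two sequences $v_1 \looparrowright^{*} \rho_1$ and $v_1 \looparrowright^{*} z_1 \looparrowright^{*} \rho(z_1)$ gives $\rho_1 \bumpeq \rho(z_1)$; symmetrically $\rho_2 \bumpeq \rho(z_2)$. For the last link, observe that because $z_1 \bumpeq z_2$ the two words share the same set of cyclic conjugates, so any $\looparrowright$-step out of $z_1$ is also a $\looparrowright$-step out of $z_2$ after adjusting the initial cyclic shift $\circlearrowleft^{i}$. Mirroring the sequence $z_1 \looparrowright^{*} \rho(z_1)$ therefore yields $z_2 \looparrowright^{*} \rho(z_1)$, and applying the inductive hypothesis to $z_2$ (also strictly below $u$) to this sequence and to $z_2 \looparrowright^{*} \rho(z_2)$ gives $\rho(z_1) \bumpeq \rho(z_2)$. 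Chaining the three $\bumpeq$-equivalences yields $\rho_1 \bumpeq \rho_2$, which closes the induction.

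The main obstacle is the last link: the careful translation from the cyclic-conjugacy $z_1 \bumpeq z_2$ into an equality of their cyclically irreducible forms, which rests on the remark that $\looparrowright$-steps depend only on the underlying set of cyclic conjugates of the source word. A secondary but easy verification is that all triples met inside the sub-sequences originating from $v_1, v_2, z_1, z_2$ remain $\widetilde{c}$-defined, which is automatic because such sub-sequences sit inside $\operatorname{Allseq}(w)$ by the very definition of $\operatorname{Allseq}$.
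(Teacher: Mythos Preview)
Your proof is correct and follows essentially the same route as the paper: Lemma~\ref{lem_one_step} supplies local confluence of $\looparrowright$ on $\operatorname{Allseq}(w)$, and termination then upgrades this to full confluence via Newman's lemma. The paper simply invokes Claim~\ref{claim_local_conf} for that second step, whereas you spell out the Noetherian induction explicitly (together with the extra bookkeeping needed because confluence is only up to $\bumpeq$); the underlying argument is identical.
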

\begin{proof}
We show in fact that the restriction of $\looparrowright$ to $\operatorname{Allseq}$$(w)$ is confluent.
From lemma \ref{lem_one_step}, the restriction of $\looparrowright$ to $\operatorname{Allseq}$$(w)$ is locally confluent since all the  triples in $\operatorname{Allseq}$$(w)$ are $\widetilde{c}$-defined.
From claim \ref{claim_local_conf},  $\looparrowright$  is confluent if and only if $\looparrowright$ is locally confluent, whenever it is terminating. So, using the same argument, we have that the restriction of $\looparrowright$ to $\operatorname{Allseq}$$(w)$ is confluent if and only if the restriction of $\looparrowright$ to $\operatorname{Allseq}$$(w)$ is locally confluent, whenever  $\operatorname{Allseq}$$(w)$ terminates.
So, the restriction of $\looparrowright$ to $\operatorname{Allseq}$$(w)$ is confluent, that is  $\operatorname{Allseq}$$(w)$ converges.
\end{proof}

\section{A necessary condition for the confluence of $\looparrowright$}
We find a necessary condition for the confluence of  $\looparrowright$, that is based on a  analysis of the rules in $\Re$. For that, we translate the signification of a triple that is not $\widetilde{c}$-defined in terms of the rules in $\Re$.
\begin{defn}
Let $w=x_{1}x_{2}x_{3}..x_{k}$ be a word, where the  $x_{i}$ are
generators for $1 \leq i \leq k$. Then we define the following
sets of words:\\
$\operatorname{pre}(w)=\{x_{1},x_{1}x_{2},x_{1}x_{2}x_{3},..,x_{1}x_{2}x_{3}..x_{k}\}$\\
$\operatorname{suf}(w)=\{x_{k},x_{k-1}x_{k},x_{k-2}x_{k-1}x_{k},..,x_{1}x_{2}x_{3}..x_{k}
\}$
 \end{defn}
  \begin{lem}\label{lem:no_Conj_pref_suff}
Let $(w,r_{1},r_{2})$ be a triple and let $l_{1}$ and $l_{2}$ denote the left-hand sides of the rules $r_{1}$ and $r_{2}$, respectively. If $\operatorname{pre}(l_{2})\cap \operatorname{suf}(l_{1}) = \emptyset$ or $\operatorname{pre}(l_{1})\cap
\operatorname{suf}(l_{2}) = \emptyset$, then the triple $(w,r_{1},r_{2})$ is $\widetilde{c}$-defined.
\end{lem}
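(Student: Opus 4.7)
The plan is to prove the contrapositive: if the triple $(w, r_1, r_2)$ is not $\widetilde{c}$-defined, then both $\operatorname{pre}(l_1) \cap \operatorname{suf}(l_2) \neq \emptyset$ and $\operatorname{pre}(l_2) \cap \operatorname{suf}(l_1) \neq \emptyset$. To this end, I would arrange $w$ around a cycle of length $n = |w|$ with positions labelled $0, 1, \ldots, n-1$. Since $r_1$ can be applied on some cyclic conjugate of $w$, the word $l_1$ occupies a cyclic arc $I_1 = \{p_1, p_1+1, \ldots, p_1 + |l_1| - 1\}$ (indices mod $n$), and similarly $l_2$ occupies an arc $I_2$. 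The key observation is that the cyclic conjugate of $w$ starting at position $k$ contains $l_1$ as a linear subword precisely when the cut at $k$ avoids the strict interior of $I_1$, i.e., when $k \notin B_1 := \{p_1+1, \ldots, p_1+|l_1|-1\}$; the analogous statement holds for $l_2$ with $B_2$. So the triple is $\widetilde{c}$-defined iff there exists a $k$ outside $B_1 \cup B_2$.

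Next, assuming the triple is not $\widetilde{c}$-defined, $B_1 \cup B_2$ must cover every position. Applying this to the two boundary points of $I_1$, which are not in $B_1$, forces both $p_1$ and $p_1 + |l_1|$ to lie in the interior of $I_2$. I would then argue that this is geometrically possible only in one of two ways: either $I_1 \subseteq I_2$ on the cycle, or $I_1$ wraps around the cycle and reenters $I_2$ from the other side. The first possibility makes $l_1$ a proper subword of $l_2$, violating the reducedness of $\Re$, so $I_1$ must wrap around. In this configuration, $I_1 \cup I_2$ covers the whole cycle, and $I_1 \cap I_2$ decomposes into two disjoint non-empty arcs: one at the start of $I_2$ that is also the end of $I_1$, yielding a word that is simultaneously a prefix of $l_2$ and a suffix of $l_1$; and one at the end of $I_2$ that is also the start of $I_1$, yielding a word that is simultaneously a suffix of $l_2$ and a prefix of $l_1$. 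Thus both intersections are non-empty.

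The main obstacle will be handling the modular arithmetic carefully and making the case analysis genuinely exhaustive. In particular, degenerate configurations (such as $p_1 = p_2$, or $p_1 + |l_1| = p_2 + |l_2|$, or $|l_1| = n$) have to be handled, and in each of them it must be checked that the triple is automatically $\widetilde{c}$-defined (for example, when $p_1 = p_2$, the position $p_1$ lies outside both $B_1$ and $B_2$, so the cut at $k = p_1$ works). One clean way to make the case analysis rigorous is to parametrise by $t_1 = (p_1 - p_2) \bmod n$ and $t_2 = (p_1 + |l_1| - p_2) \bmod n$, each constrained to $\{1, \ldots, |l_2|-1\}$, and read off the overlap lengths as $|l_2| - t_1$ and $t_2$ respectively; the subcase $t_2 > t_1$ gives $I_1 \subsetneq I_2$ and is excluded by reducedness, leaving $t_2 < t_1$, which is exactly the wrap-around case producing both prefix-suffix matches. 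The case $\operatorname{pre}(l_1) \cap \operatorname{suf}(l_2) = \emptyset$ follows by symmetry, swapping the roles of $l_1$ and $l_2$.
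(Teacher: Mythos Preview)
Your proof is correct and takes a different route from the paper. The paper argues directly: under each hypothesis (both intersections empty, or exactly one empty) it simply writes down a cyclic conjugate $\widetilde{w}$ of the form $x\, l_{1}\, y\, l_{2}\, z$ or $x\, l'_{1} l''_{1} l''_{2}\, y$ and asserts that such a conjugate of $w$ exists, with the verification that the two occurrences can indeed be made simultaneously linear left largely implicit. You instead prove the contrapositive via a covering argument on $\mathbb{Z}/n\mathbb{Z}$: if no cut works then the two forbidden interiors $B_{1}$, $B_{2}$ cover all $n$ positions, which forces both endpoints of $I_{1}$ into the interior of $I_{2}$; reducedness then excludes the containment case, leaving only the wrap-around configuration, which exhibits both prefix--suffix matches at once. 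This is essentially the content of the paper's subsequent Lemma~\ref{lem_notdefined_form_cyclicaloverlap}, so your approach in effect merges the two lemmas into a single argument. The covering viewpoint makes the case analysis more systematic and the role of reducedness explicit, at the cost of a little modular-arithmetic bookkeeping; the paper's direct construction is shorter but relies on the reader supplying the geometric picture.

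One small imprecision worth fixing: the biconditional ``the conjugate starting at $k$ contains $l_{1}$ as a linear subword \emph{precisely} when $k\notin B_{1}$'' fails if $l_{1}$ has several cyclic occurrences in $w$; only the direction $k\notin B_{1}\Rightarrow l_{1}$ appears is valid in general. Fortunately that is the only direction your contrapositive actually uses (from $k\notin B_{1}\cup B_{2}$ you conclude the triple is $\widetilde{c}$-defined), so the argument stands, but the sentence should be rephrased accordingly.
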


\begin{proof}
From the assumption, $l_{1}$ is a subword of $w_{1}$ and $l_{2}$ is a subword of $w_{2}$, where $w_{1}$ and $w_{2}$ are cyclic conjugates of $w$.
We show that there exists a cyclic conjugate of $w$, $\widetilde{w}$, such that both $l_{1}$ and $l_{2}$ are subwords of $\widetilde{w}$.
If $\operatorname{pre}(l_{2})\cap \operatorname{suf}(l_{1}) = \emptyset$ or $\operatorname{pre}(l_{1})\cap
\operatorname{suf}(l_{2}) = \emptyset$, then there are three possibilities:\\
 $(i)$ If $\operatorname{pre}(l_{2})\cap \operatorname{suf}(l_{1}) = \emptyset$ and $\operatorname{pre}(l_{1})\cap
\operatorname{suf}(l_{2}) = \emptyset$, then there is no overlap ambiguity between the rules $r_{1}$ and $r_{2}$, so take
 $\widetilde{w}=xl_{1}yl_{2}z$, where $x,y,z$ are words, that makes $\widetilde{w}$ a cyclic conjugate of $w$.\\
 $(ii)$ If $\operatorname{pre}(l_{2})\cap \operatorname{suf}(l_{1}) \neq \emptyset$ and $\operatorname{pre}(l_{1})\cap
\operatorname{suf}(l_{2})= \emptyset$, then there is an overlap  ambiguity between the rules $r_{1}$ and $r_{2}$.
 We denote $l_{1}=l'_{1}l''_{1}$ and $l_{2}=l'_{2}l''_{2}$ and from the assumption $\operatorname{pre}(l'_{1})\cap \operatorname{suf}(l''_{2})= \emptyset$. Assume there is an overlap where $l''_{1}=l'_{2}$. So, take $\widetilde{w}=xl'_{1}l''_{1}l''_{2}y$, where $x,y$ are words, that makes $\widetilde{w}$ a cyclic conjugate of $w$.\\
   $(iii)$ If $\operatorname{pre}(l_{2})\cap \operatorname{suf}(l_{1}) = \emptyset$ and $\operatorname{pre}(l_{1})\cap
\operatorname{suf}(l_{2}) \neq \emptyset$, then this  is symmetric to case $(ii)$.
  \end{proof}
Note that if $\operatorname{pre}(l_{2})\cap \operatorname{suf}(l_{1}) \neq \emptyset$ and $\operatorname{pre}(l_{1})\cap
\operatorname{suf}(l_{2}) \neq \emptyset$, then it does not necessarily imply
that all the triples of the form $(w,r_{1},r_{2})$ are not $\widetilde{c}$-defined.
Yet, as the following example and lemma show, there exists a triple $(w,r_{1},r_{2})$ that is not $\widetilde{c}$-defined.
\begin{ex}\label{ex_wirt_tripledefined}
 Let $\Re= \{xy \rightarrow zx, yz \rightarrow zx, xz^{n}x  \rightarrow zxzy^{n-1}, n \geq 1\}$ from Ex. \ref{ex_def_triple}. The rules $xz^{2}x\rightarrow zxzy$ and $xz^{3}x\rightarrow zxzy^{2}$ satisfy
 $\operatorname{pre}(xz^{2}x)\cap \operatorname{suf}(xz^{3}x)=\{x\}$ and $\operatorname{pre}(xz^{3}x)\cap \operatorname{suf}(xz^{2}x)=\{x\}$. Yet, the triple $(xz^{2}xz^{3}x,\allowbreak xz^{2}x \rightarrow zxzy, xz^{3}x \rightarrow zxzy^{2})$ is $\widetilde{c}$-defined, but the triple  $(xz^{2}xz^{3},xz^{2}x \rightarrow zxzy, xz^{3}x \allowbreak \rightarrow zxzy^{2})$ is not $\widetilde{c}$-defined.
\end{ex}
\begin{lem}
Let $r_{1}$ and $r_{2}$ be rules in $\Re$ and we denote by $l_{1}$ and $l_{2}$ the left-hand sides of the rules $r_{1}$ and $r_{2}$ respectively.
Assume that $\operatorname{pre}(l_{1})\cap \operatorname{suf}(l_{2})\supseteq\{x\}$
and $\operatorname{pre}(l_{2})\cap \operatorname{suf}(l_{1})\supseteq\{x'\}$, where $x,x'$ are non-empty words.
Then there is a triple $(w,r_{1},r_{2})$ that is not $\widetilde{c}$-defined.
\end{lem}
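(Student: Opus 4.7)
My plan is to use the hypotheses to build a single cyclic word $w$ that is so tightly packed that no linear cyclic conjugate of $w$ can exhibit both $l_{1}$ and $l_{2}$ simultaneously, and then to verify this by a short counting argument on cut positions.

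First, using the hypotheses I would write $l_{1}=x\alpha=\delta x'$ and $l_{2}=x'\gamma=\beta x$ for suitable (possibly empty) words $\alpha,\beta,\gamma,\delta$. Because $\Re$ is reduced, neither $l_{1}$ nor $l_{2}$ is a subword of the other, so $|x|,|x'|<\min(|l_{1}|,|l_{2}|)$ and $n:=|l_{1}|+|l_{2}|-|x|-|x'|\ge 1$. I would then define $w$ to be the word of length $n$, viewed cyclically on $\mathbb{Z}/n\mathbb{Z}$, in which $l_{1}$ labels the arc $[0,|l_{1}|)$ and $l_{2}$ labels the arc $[|l_{1}|-|x'|,\,|l_{1}|-|x'|+|l_{2}|)\pmod{n}$. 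The two arcs overlap in exactly two regions: the positions $[|l_{1}|-|x'|,|l_{1}|)$ carry both the suffix $x'$ of $l_{1}$ and the prefix $x'$ of $l_{2}$, while (after wrap) the positions $[0,|x|)$ carry both the prefix $x$ of $l_{1}$ and the suffix $x$ of $l_{2}$. Both overlaps are consistent by the choice of $x$ and $x'$, so the labelling defines a genuine word $w$. By construction $l_{1}$ is a contiguous subword of $w$ itself and $l_{2}$ is a contiguous subword of the cyclic rotation of $w$ starting at position $|l_{1}|-|x'|$; hence both $r_{1}$ and $r_{2}$ are applicable to some cyclic conjugate of $w$, so $(w,r_{1},r_{2})$ is indeed a well-defined triple.

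The heart of the argument is then to show that no single cyclic conjugate of $w$ hosts both occurrences. For any cut position $k\in\mathbb{Z}/n\mathbb{Z}$, an arc $[a,a+|l|)$ fails to be a contiguous subword of the linear reading starting at $k$ precisely when $k\in\{a+1,\ldots,a+|l|-1\}\pmod{n}$. Writing out these ``bad cut'' sets, I would obtain $\{1,\ldots,|l_{1}|-1\}$ for $l_{1}$ and $\{|l_{1}|-|x'|+1,\ldots,n-1\}\cup\{0,\ldots,|x|-1\}$ for $l_{2}$. A short direct check then shows that their union equals all of $\mathbb{Z}/n\mathbb{Z}$, so every cyclic conjugate splits at least one of the arcs. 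Hence no cyclic conjugate of $w$ contains both $l_{1}$ and $l_{2}$ as contiguous subwords, and $(w,r_{1},r_{2})$ is not $\widetilde{c}$-defined.

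The main obstacle I anticipate is combinatorial bookkeeping: verifying the coverage of the two bad cut sets modulo $n$ in all placements, and ensuring that the construction remains meaningful when $|x|+|x'|$ is comparable to $\min(|l_{1}|,|l_{2}|)$ so that the two arcs wrap heavily around the cycle. In that regime, the coincidence of the overlap regions is still forced automatically by the equalities $l_{1}=x\alpha=\delta x'$ and $l_{2}=x'\gamma=\beta x$, so the labelling remains consistent, but one must check that $n$ is still large enough for $l_{1}$ and $l_{2}$ to each appear as a linear subword of some cyclic conjugate; this is the subtlety that genuinely requires the reducedness of $\Re$ and the strict inequalities $|x|,|x'|<\min(|l_{1}|,|l_{2}|)$.
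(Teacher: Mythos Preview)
Your construction coincides with the paper's: writing $l_{1}=xyx'$ and $l_{2}=x'vx$, the paper simply takes $w=xyx'v$ (your cyclic word of length $n=|l_{1}|+|l_{2}|-|x|-|x'|$) and asserts in one sentence that no cyclic conjugate admits both rules. You go further by supplying the cut-position covering argument that the paper omits entirely, so in that respect you are more detailed than the paper itself.

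There is, however, a genuine gap, and it is one your argument shares with the paper's bare assertion. Your counting shows only that the two \emph{designated} arcs---$l_{1}$ at $[0,|l_{1}|)$ and $l_{2}$ at $[|l_{1}|-|x'|,\,|l_{1}|-|x'|+|l_{2}|)$---cannot both survive a single cut. But ``$\widetilde{c}$-defined'' asks whether some cyclic conjugate contains occurrences of $l_{1}$ and $l_{2}$ \emph{anywhere}, not only at those positions; your final sentence quietly upgrades ``hosts both occurrences'' to ``contains both $l_{1}$ and $l_{2}$ as contiguous subwords'', and that upgrade can fail. With $l_{1}=abab$, $l_{2}=baba$, $x=a$, $x'=b$ your construction gives $w=ababab$, and the conjugate $ababab$ already contains both $abab$ (positions $0$--$3$) and $baba$ (positions $1$--$4$), so this particular triple \emph{is} $\widetilde{c}$-defined. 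The lemma is nonetheless true---choosing $x'=bab$ instead yields $w=abab$, which works---but neither your argument nor the paper's one-line proof controls such accidental extra occurrences. To close the gap you would need to show that one may always pass to a choice of $x,x'$ (for instance maximal overlaps, minimizing $n$) for which no stray occurrence of $l_{1}$ or $l_{2}$ can fit, or alternatively that whenever the constructed $w$ turns out to be $\widetilde{c}$-defined one can extract a strictly shorter $w'$ that is not.
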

\begin{proof}
We have that $l_{1}=xyx'$ and $l_{2}=x'vx$, where
$y,v$ are words and $x,x'$ are non-empty words.
Take $w$ to be  the word $xyx'v$, then it has no cyclic conjugate such that both the rules $r_{1}$ and $r_{2}$ can be applied on it.
\end{proof}
\begin{lem}\label{lem_notdefined_form_cyclicaloverlap}
Let $(w,r_{1},r_{2})$ be a triple and we denote by $l_{1}$ and $l_{2}$ the left-hand sides of the rules $r_{1}$ and $r_{2}$, respectively.
Assume that $(w,r_{1},r_{2})$ is not $\widetilde{c}$-defined. Then  $l_{1}=xuy$ and  $l_{2}=yvx$, where $u,v$ are words and $x,y$ are non-empty words.
\end{lem}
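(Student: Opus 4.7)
The approach I will take is geometric: view $w$ as a cyclic word $[w]$ of length $n=|w|$, and view the occurrences of $l_1$ and $l_2$ witnessing the triple as two arcs $A_1, A_2$ of consecutive positions on $[w]$, of lengths $|l_1|$ and $|l_2|$ respectively. Cyclic conjugates of $w$ correspond to cuts of $[w]$, and the rule $r_i$ can be applied to a given cyclic conjugate precisely when the corresponding cut does not split the arc $A_i$. Thus $(w,r_1,r_2)$ is $\widetilde{c}$-defined iff some cut splits neither arc, and \emph{not} $\widetilde{c}$-defined iff every cut splits at least one of $A_1$, $A_2$.

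From the hypothesis I would derive two structural facts. First, $A_1 \cup A_2 = [w]$: otherwise a position $p \notin A_1 \cup A_2$ furnishes a cut (just before $p$) splitting neither arc. Second, $A_1 \cap A_2$ has exactly two connected components. On a cycle, the intersection of two arcs has $0$, $1$, or $2$ components; the reducedness of $\Re$ rules out $A_1 \subseteq A_2$ and $A_2 \subseteq A_1$; and in the remaining $0$- or $1$-component configurations one can always exhibit consecutive positions with $p \in A_1 \setminus A_2$ and $p+1 \in A_2 \setminus A_1$ (or vice versa), whose cut splits neither arc, contradicting the hypothesis.

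With $A_1 \cap A_2$ equal to two non-empty arcs, $[w]$ decomposes cyclically into four consecutive blocks: (overlap $1$), ($A_1$-only), (overlap $2$), ($A_2$-only). Let $x, y$ be the two overlap words, labeled so that $x$ is the prefix of $l_1$ and $y$ the suffix (i.e.\ reading along $A_1$ starting from overlap $1$), and let $u, v$ be the $A_1$-only and $A_2$-only words. Reading along $A_1$ gives $l_1 = xuy$; reading along $A_2$, which starts at overlap $2$, goes through the $A_2$-only block, and ends at overlap $1$, gives $l_2 = yvx$. Both $x$ and $y$ are non-empty since the two overlap arcs are non-empty, which is the desired decomposition.

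The main obstacle is the two-component step, since it requires checking the possible configurations of two arcs on a cycle, and in particular the degenerate cases $|l_1|=n$ or $|l_2|=n$ where one arc fills all of $[w]$. Those degenerate cases still yield the decomposition, but with $u$ or $v$ possibly empty: if $|l_1|=n$, writing $l_1 = \alpha\beta$ so that $l_2$ straddles the cut between $\beta$ and $\alpha$ in the cyclic conjugate, one obtains $l_2 = yx$ with $x$ a prefix of $\alpha$ and $y$ a suffix of $\beta$, both non-empty. A purely combinatorial alternative is to invoke the contrapositive of Lemma \ref{lem:no_Conj_pref_suff} to obtain non-empty $x \in \operatorname{pre}(l_1)\cap\operatorname{suf}(l_2)$ and $y \in \operatorname{pre}(l_2)\cap\operatorname{suf}(l_1)$ and then argue that they can be chosen small enough so that $|x|+|y| \leq \min(|l_1|,|l_2|)$; but the geometric picture makes the non-overlap transparent without length bookkeeping.
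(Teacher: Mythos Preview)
Your proof is correct and takes a genuinely different route from the paper's. The paper argues purely combinatorially: it invokes the contrapositive of Lemma~\ref{lem:no_Conj_pref_suff} to obtain non-empty words in $\operatorname{pre}(l_2)\cap\operatorname{suf}(l_1)$ and in $\operatorname{pre}(l_1)\cap\operatorname{suf}(l_2)$, and then runs a four-case analysis on whether these prefix and suffix occurrences overlap inside $l_1$ or $l_2$; in three of the four cases an explicit $\widetilde w$ is exhibited, showing the triple would after all be $\widetilde c$-defined, so that only the configuration $l_1=xuy$, $l_2=yvx$ survives.

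Your geometric argument replaces that case split by a single structural observation: the hypothesis forces the internal edges of the two arcs to cover the whole cycle, which in turn forces the position-arcs to meet in exactly two components. This makes the origin of the decomposition transparent and sidesteps the somewhat ad hoc constructions of $\widetilde w$ in the paper's cases (ii)--(iv). The trade-off is that you must treat the degenerate situations $|l_i|=n$ separately, since then $A_i$ is the full cycle and ``$A_1\subseteq A_2$ is ruled out by reducedness'' no longer applies directly (containment in a full-cycle arc does not make $l_1$ a subword of the \emph{linear} word $l_2$). You do address this, and the resulting decomposition, with $u$ or $v$ possibly empty, is exactly what the statement allows. One point worth tightening in a final write-up: make explicit that for \emph{proper} arcs, $A_1\subseteq A_2$ as position sets really does force $l_1$ to be a contiguous subword of $l_2$; this is what justifies invoking reducedness there and is also precisely why the full-cycle case has to be split off.
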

\begin{proof}
The triple $(w,r_{1},r_{2})$ is not $\widetilde{c}$-defined, so from lemma \ref{lem:no_Conj_pref_suff},
 $\operatorname{pre}(l_{2})\cap \operatorname{suf}(l_{1}) \neq \emptyset$ and $\operatorname{pre}(l_{1})\cap
\operatorname{suf}(l_{2}) \neq \emptyset$. Assume that  $\operatorname{pre}(l_{2})\cap \operatorname{suf}(l_{1})\supseteq \{x\}$ and $\operatorname{pre}(l_{1})\cap
\operatorname{suf}(l_{2}) \supseteq \{y\}$, where $x,y$ are non-empty words.
So, $l_{1}$ and $l_{2}$ have one of the following forms:\\
$(i)$ $l_{1}=xuy$ and  $l_{2}=yvx$, where $u,v$ are words.\\
$(ii)$ $l_{1}=xy$ and  $l_{2}=yx''$, where $x=x'x''$, $y=y'y''$ and $y''=x'$.\\
$(iii)$ $l_{1}=xy''$ and  $l_{2}=yx$, where $x=x'x''$, $y=y'y''$ and $x''=y'$.\\
$(iv)$ $l_{1}=xy''$ and  $l_{2}=yx''$, where $x=x'x''$, $y=y'y''$, and $y''=x'$, $x''=y'$.\\
We show that only case $(i)$ occurs, by showing that in the cases $(ii)$, $(iii)$ and $(iv)$  the triple $(w,r_{1},r_{2})$ is  $\widetilde{c}$-defined. This is done by describing $\widetilde{w}$ on which both $r_{1}$ and $r_{2}$ can be applied.
In any case, $w_{1}$ has to contain an occurrence of $l_{1}$ and $w_{2}$ has to contain an occurrence of $l_{2}$, where $w_{1}$ and $w_{2}$ are cyclic conjugates of $w$. In case $(ii)$, $l_{1}=x'x''y'y''$ and  $l_{2}=y'y''x''$, where  $y''=x'$, so there exists  $\widetilde{w}=x'x''y'y''x''$ such that it  contains one occurrence of $l_{1}$ and one occurrence of $l_{2}$. Case $(iii)$ is symmetric to case $(ii)$ and we consider  case $(iv)$.
In case $(iv)$,  $l_{1}=x'x''y''$ and  $l_{2}=y'y''x''$, where  $y''=x'$ and  $x''=y'$, so using the same argument as before, take $\widetilde{w}$ to be $x'x''y''x''$.
So, case $(i)$ occurs and $w$ has the form $xuyv$.
\end{proof}
\begin{defn}
We say that there is a \emph{cyclical overlap} between rules, if
there are two rules in $\Re$ of the form $xuy \rightarrow u'$ and
$yvx \rightarrow v'$, where $u',v'$ are words, $u,v,x,y$ are non-empty words and such that
$u'v$ and $v'u$ are not cyclic conjugates in $\Sigma^{*}$.
We say that there is a \emph{cyclical inclusion} if there are
two rules in $\Re$, $l\rightarrow v$ and $l' \rightarrow v'$,
where $l,v,l',v'$ are words and
 $l'$ is a cyclic conjugate of $l$ or $l'$ is a proper subword of a cyclic conjugate of
 $l$.  Whenever $l'$ is a cyclic conjugate of $l$, $v$ and $v'$ are not cyclic
 conjugates in $\Sigma^{*}$ and whenever $l'$ is a proper subword of $l_{1}$, where $l_{1}$ is a cyclic conjugate of
 $l$ (there is a non-empty word $u$ such that $l_{1}=ul'$),
then it holds that $l \rightarrow r$ and $l\circlearrowleft^{i}
l_{1}=ul' \rightarrow
uv'$ and $v$ and $uv'$ are not cyclic conjugates in $\Sigma^{*}$.
\end{defn}
\begin{ex}
In Example \ref{ex_wirt_tripledefined}, there is a cyclical overlap between the rules $xz^{2}x \rightarrow zxzy$ and $xz^{3}x \rightarrow zxzy^{2}$. In Example \ref{ex_not_cyc_confluent}, there is a cyclical inclusion between the rules $ab \rightarrow \underline{ab}$ and $ba \rightarrow \underline{ba}$.
\end{ex}
\begin{lem}\label{lem_pref_overlap}
Let $(w,r_{1},r_{2})$ be a triple  and let  $l_{1}$ and $l_{2}$ be the left-hand sides of the rules $r_{1}$ and $r_{2}$, respectively.
Assume that the triple $(w,r_{1},r_{2})$ is not  $\widetilde{c}$-defined.
Then there is a cyclical overlap or a cyclical inclusion between $r_{1}$ and $r_{2}$.
\end{lem}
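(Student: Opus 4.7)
The plan is to start from Lemma \ref{lem_notdefined_form_cyclicaloverlap}, which already provides the structural decomposition of the two left-hand sides as $l_{1} = xuy$ and $l_{2} = yvx$ with $x$ and $y$ non-empty, and then case-split on whether $u$ and $v$ are also non-empty or not.

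In the first subcase both $u$ and $v$ are non-empty. Writing the rules as $r_{1} \colon xuy \to u'$ and $r_{2} \colon yvx \to v'$, the pair $(r_{1}, r_{2})$ has exactly the structural form required by the definition of cyclical overlap. It only remains to verify the non-conjugacy clause, namely that $u'v$ and $v'u$ are not cyclic conjugates in $\Sigma^{*}$. I would approach this by contradiction: if $u'v \bumpeq v'u$, then using how $l_{1}$ and $l_{2}$ must sit in their respective cyclic conjugates $w_{1}, w_{2}$ of $w$, together with the overlap pattern $xuyvx$, one would reassemble a single cyclic conjugate $\widetilde{w}$ of $w$ carrying both rule applications, violating the hypothesis that $(w,r_{1},r_{2})$ is not $\widetilde{c}$-defined.

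In the second subcase, at least one of $u, v$ is empty; by symmetry assume $u$ is empty. Then $l_{1} = xy$. If $v$ is also empty, then $l_{2} = yx$ is a cyclic conjugate of $l_{1}$, matching the first clause of the cyclical inclusion definition. If $v$ is non-empty, then $xyv$ is a cyclic conjugate of $l_{2}$ in which $l_{1} = xy$ occurs as a proper prefix, matching the second clause of the cyclical inclusion definition. The non-conjugacy of the corresponding right-hand sides demanded by the cyclical inclusion definition would be verified analogously, by deriving a $\widetilde{c}$-defining cyclic conjugate of $w$ from any hypothetical conjugacy of the reducts.

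The principal technical obstacle is exactly this verification of the non-conjugacy clauses baked into the definitions of cyclical overlap and cyclical inclusion. The structural form comes out immediately from Lemma \ref{lem_notdefined_form_cyclicaloverlap} by a short enumeration, but bookkeeping the precise interaction between the blocks $x, u, y, v$ inside $w$ and the reducts $u', v'$ (or $u'$ and a translate of $v'$ in the inclusion subcase) under the assumption that no cyclic conjugate of $w$ accommodates both left-hand sides is where the delicacy lies; it is essentially the same kind of case analysis used in the proofs of Lemmas \ref{lem_one_step} and \ref{lem_notdefined_form_cyclicaloverlap}, extended to track the images of the rules.
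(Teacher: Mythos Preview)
Your case-split from Lemma~\ref{lem_notdefined_form_cyclicaloverlap} into (i) both $u,v$ non-empty, (ii) exactly one empty, (iii) both empty, is precisely what the paper does; the structural conclusions (cyclical overlap in case~(i), cyclical inclusion in the other cases) are identical to the paper's argument.

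Where you go further is in trying to verify the non-conjugacy clauses built into the definitions (that $u'v \not\bumpeq v'u$ for a cyclical overlap, and the analogous conditions for a cyclical inclusion). The paper's own proof does \emph{not} check these clauses at all: once the structural form $l_{1}=xuy$, $l_{2}=yvx$ is obtained it simply asserts ``there is a cyclical overlap'' (resp.\ inclusion) and stops. So in this respect your proposal is more scrupulous than the paper.

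That said, the contradiction argument you sketch for the non-conjugacy does not work as written. Being $\widetilde{c}$-defined concerns only whether the \emph{left-hand sides} $l_{1},l_{2}$ can be placed inside a single cyclic conjugate of $w$; the hypothesis $u'v \bumpeq v'u$ is a relation between the \emph{reducts} $u',v'$ and the blocks $u,v$, and carries no information about how $l_{1}$ and $l_{2}$ sit inside $w$. There is no mechanism by which a free-monoid conjugacy of $u'v$ and $v'u$ manufactures a cyclic conjugate $\widetilde{w}$ of $w$ hosting both $l_{1}$ and $l_{2}$, so the contradiction cannot be closed along the line you indicate. If one insists on the definitions read strictly, this point remains open in the paper's proof as well; the paper effectively treats the structural pattern alone as constituting the overlap/inclusion.
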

\begin{proof}
The triple $(w,r_{1},r_{2})$ is not $\widetilde{c}$-defined, so from lemma \ref{lem_notdefined_form_cyclicaloverlap}, $l_{1}=xuy$ and $l_{2}=yvx$, where $x,y$ are non-empty words and
$u,v$ are words. If $u$ and $v$ are both the empty word, then
$l_{1}$ and $l_{2}$ are cyclic conjugates, that is there is a
cyclical inclusion. If $u$ is the empty word but $v$ is not the
empty word, then $l_{1}=xy$ and $l_{2}=yvx$, which means that
$l_{1}$ is a subword of a cyclic conjugate of $l_{2}$. So, in this
case and in the symmetric case (that is  $v$ is the empty word but $u$
is not the empty word) there is a cyclical inclusion. If none of
$u$ and $v$ is the empty word, then $l_{1}=xuy$ and $l_{2}=yvx$,
that is there is a cyclical overlap between these two rules.
\end{proof}
\begin{prop}\label{prop_nooverlap_allseq_converges}
Let $w$ be a word in $\Sigma^{*}$
 and assume that $\operatorname{Allseq}$$(w)$ terminates.
  If there are no cyclical overlaps and cyclical inclusions in $\operatorname{Allseq}$$(w)$, then $\operatorname{Allseq}$$(w)$ converges.
\end{prop}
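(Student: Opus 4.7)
The plan is a two-step reduction that simply chains Lemma \ref{lem_pref_overlap} with Proposition \ref{prop_triple_defined_converge}. First I would recast the hypothesis in its contrapositive form. Lemma \ref{lem_pref_overlap} asserts that whenever a triple $(w',r_{1},r_{2})$ fails to be $\widetilde{c}$-defined, the pair $\{r_{1},r_{2}\}$ produces either a cyclical overlap or a cyclical inclusion. Taking the contrapositive: if no cyclical overlaps and no cyclical inclusions are present among the rules applied inside $\operatorname{Allseq}(w)$, then every triple $(w',r_{1},r_{2})$ arising in $\operatorname{Allseq}(w)$ is $\widetilde{c}$-defined.

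Having promoted the hypothesis from a condition on pairs of rules to a condition on triples, I would then invoke Proposition \ref{prop_triple_defined_converge} directly. Its two hypotheses are now both in hand: $\operatorname{Allseq}(w)$ terminates by assumption, and every triple occurring in $\operatorname{Allseq}(w)$ is $\widetilde{c}$-defined by the contrapositive step above. Its conclusion is that $\operatorname{Allseq}(w)$ converges, which is exactly what is claimed.

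There is essentially no real obstacle; the proposition is a bookkeeping reformulation of the earlier result in terms of the more concrete rule-level invariants (cyclical overlap, cyclical inclusion) introduced in this section. The only care required is a careful reading of the hypothesis: ``no cyclical overlaps and cyclical inclusions in $\operatorname{Allseq}(w)$'' must be interpreted as ruling out these phenomena for every pair of rules $(r_{1},r_{2})$ that is actually applied along sequences of cyclical reductions in $\operatorname{Allseq}(w)$. Under that interpretation, Lemma \ref{lem_pref_overlap} applies to each triple one may encounter, and the proof is essentially a single line citing the two previous results.
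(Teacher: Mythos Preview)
Your proposal is correct and essentially identical to the paper's own proof: the paper argues by contraposition that non-convergence of $\operatorname{Allseq}(w)$ yields (via Proposition \ref{prop_triple_defined_converge}) a triple that is not $\widetilde{c}$-defined, and hence (via Lemma \ref{lem_pref_overlap}) a cyclical overlap or inclusion. This is the same chaining of the two results you describe, merely with the contrapositive taken at the outer level rather than at the lemma level.
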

\begin{proof}
If $\operatorname{Allseq}$$(w)$ does not converge, then from Proposition \ref{prop_triple_defined_converge}, this implies that
there is a triple $(w,r_{1},r_{2})$ in $\operatorname{Allseq}$$(w)$ that is not $\widetilde{w}-$defined.
 From lemma \ref{lem_pref_overlap}, this implies that there is a cyclical overlap or a cyclical inclusion in $\operatorname{Allseq}$$(w)$.
\end{proof}
Note that the converse is not necessarily true, that is there may be a cyclical overlap or a cyclical inclusion in $\operatorname{Allseq}$$(w)$ and yet a unique cyclically irreducible form is achieved in $\operatorname{Allseq}$$(w)$, as in the following example.
\begin{ex}
Let $\Re=\{ bab\rightarrow aba,ba^{n}ba\rightarrow aba^{2}b^{n-1}, n \geq 2\}$.
Let $w=ba^{2}ba$, then $\operatorname{Allseq}$$(w)$ does not terminate (see Ex. \ref{ex_braid_B3}). The triple $(w,bab\rightarrow aba, ba^{2}ba \rightarrow aba^{2}b )$ is not $\widetilde{c}-$defined since there is a cyclical inclusion of  the rule $bab\rightarrow aba$ in the rule $ba^{2}ba \rightarrow aba^{2}b$. Nevertheless, $w$ has a unique cyclically irreducible form $ba^{4}$ (up to $\bumpeq$): $ba^{2}ba\rightarrow aba^{2}b \circlearrowleft^{4} baba^{2}\rightarrow abaa^{2}$.
In fact, each $w=ba^{n}ba$ where $n \geq 2$ has a unique cyclically irreducible form $ba^{n+2}$ (up to $\bumpeq$).
\end{ex}

\begin{thm}\label{theo:conf_over_inc_amb_resolv}
Let $\Re$ be a complete and reduced linear rewriting system that is
cyclically terminating. If there are no rules in $\Re$ with cyclical overlaps or cyclical inclusions,
then $\Re$ is cyclically confluent.
\end{thm}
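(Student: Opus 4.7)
The plan is to reduce to the local-confluence framework already built in Sections 4 and 5 and then invoke Claim~\ref{claim_local_conf}. The key observation is that the absence of cyclical overlaps and cyclical inclusions in $\Re$ is exactly the global condition needed to make every applicable triple $\widetilde{c}$-defined, which is the hypothesis of Lemma~\ref{lem_one_step}.

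First I would verify local cyclic confluence of $\looparrowright$. Fix arbitrary $w, v_1, v_2 \in \Sigma^{*}$ with $w \looparrowright^{r_1} v_1$ and $w \looparrowright^{r_2} v_2$, so that there are cyclic conjugates $w_1, w_2 \bumpeq w$ where the left-hand sides $l_1, l_2$ occur. By construction $(w,r_1,r_2)$ is a triple in the sense of Section~4. I claim this triple is $\widetilde{c}$-defined: by the contrapositive of Lemma~\ref{lem_pref_overlap}, if the triple were not $\widetilde{c}$-defined, then the pair $r_1, r_2$ would exhibit either a cyclical overlap or a cyclical inclusion, contradicting the hypothesis on $\Re$. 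Now Lemma~\ref{lem_one_step} applies and yields cyclically conjugate words $z_1, z_2$ with $v_1 \looparrowright^{*} z_1$ and $v_2 \looparrowright^{*} z_2$. Thus $\looparrowright$ is locally cyclically confluent.

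Having local cyclic confluence together with the assumed cyclic termination, Claim~\ref{claim_local_conf} immediately upgrades this to cyclic confluence of $\looparrowright$, i.e. $\Re$ is cyclically confluent, as required.

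The only step that requires genuine care is the first one: one must be sure that the hypothesis ``$\Re$ has no rules with cyclical overlaps or cyclical inclusions'' actually forbids every bad triple and not merely a restricted class. But this is exactly the content of Lemma~\ref{lem_pref_overlap}, whose contrapositive says that whenever no such overlap or inclusion between $r_1$ and $r_2$ exists, every triple $(w,r_1,r_2)$ is $\widetilde{c}$-defined. So the apparent obstacle dissolves into an application of a lemma already proved, and the result follows by stringing together Lemma~\ref{lem_pref_overlap}, Lemma~\ref{lem_one_step}, and Claim~\ref{claim_local_conf}. (Equivalently, one could argue word-by-word via Proposition~\ref{prop_nooverlap_allseq_converges}, conclude that every $\operatorname{Allseq}(w)$ converges, and note that convergence of every $\operatorname{Allseq}(w)$ is precisely cyclic confluence since cyclic descendants of $u$ lie in $\operatorname{Allseq}(w)$ whenever $w\looparrowright^{*}u$.)
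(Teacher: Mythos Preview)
Your proof is correct and uses exactly the same ingredients as the paper---the contrapositive of Lemma~\ref{lem_pref_overlap}, Lemma~\ref{lem_one_step}, and Claim~\ref{claim_local_conf}. The paper's own argument routes through Proposition~\ref{prop_nooverlap_allseq_converges} (precisely the alternative you sketch in your final parenthetical), which is nothing more than those same lemmas wrapped in the $\operatorname{Allseq}(w)$ language; once unpacked, the two proofs coincide.
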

\begin{proof}
 From Proposition \ref{prop_nooverlap_allseq_converges}, if there are no rules in $\Re$ with cyclical overlaps or cyclical inclusions then $\operatorname{Allseq}$$(w)$ converges for all $w$.
 Since $\Re$ is cyclically terminating, $\Re$ is cyclically confluent if and only if
$\operatorname{Allseq}$$(w)$ converges for all $w$, so the proof is done.
\end{proof}

\section{The algorithm of cyclical completion}
 Knuth and Bendix have elaborated an
algorithm which for a given finite and terminating  rewriting system $\Re$, tests its completeness and if $\Re$ is not complete
then new rules are added to complete it. Instead of testing the
confluence of $\Re$, the algorithm tests the locally confluence of
$\Re$, since for a terminating  rewriting system  locally
confluence and confluence are equivalent. Two  rewriting systems
$\Re$ and $\Re'$ are said to be \emph{equivalent} if :\
$w_{1}\leftrightarrow^{*}w_{2}$ modulo $\Re$ if and only if
$w_{1}\leftrightarrow^{*}w_{2}$ modulo $\Re'$. So, by applying the
Knuth-Bendix algorithm on a terminating  rewriting system $\Re$ a
complete  rewriting system $\Re'$ that is equivalent to $\Re$ can
be found. Our aim in this section is to provide an algorithm of
cyclical completion which is much inspired by  the Knuth-Bendix
algorithm of completion. \\Let $\Re$ be a complete,
reduced and cyclically terminating  rewriting system, we assume that $\Re$ is finite.
 From Theorem \ref{theo:conf_over_inc_amb_resolv},
 if there are no cyclical overlaps or cyclical inclusions then $\Re$ is cyclically
confluent. Nevertheless, if there is a cyclical overlap or a cyclical inclusion, we define when it resolves in the following way. We say that the cyclical overlap between the rules $xuy \rightarrow u'$ and
$yvx \rightarrow v'$, where $u,v,u',v'$ are words, $x,y$ are non-empty words \emph{resolves} if there exist cyclically conjugate words $z$ and $z'$ such that $u'v\looparrowright^{*}z$ and
$uv'\looparrowright^{*}z'$.
If there is a  cyclical inclusion between the rules $l\rightarrow v$ and $l' \rightarrow v'$,
where $l,v,l',v'$ are words and
 $l'$ is a cyclic conjugate of $l$ or $l'$ is a proper subword of a cyclic conjugate of
 $l$, then we say that it resolves if there exist cyclically conjugate words $z$ and $z'$ such  that $v\looparrowright^{*}z$ and
$v'\looparrowright^{*}z'$ in the first case or
$v\looparrowright^{*}z$ and $uv'\looparrowright^{*}z'$
in the second case ($z \bumpeq z'$).
\begin{ex}
We consider the complete and finite rewriting system from Ex. \ref{ex_not_cyc_confluent}. Since there is a cyclical inclusion between the rules $ab \rightarrow \underline{ab}$ and $ba \rightarrow \underline{ba}$, it holds that  $ab \looparrowright \underline{ab}$
and $ab \looparrowright \underline{ba}$, where $\underline{ab}$ and $\underline{ba}$ are cyclically irreducible. We can decide arbitrarily wether  $\underline{ab} \looparrowright^{+}\underline{ba}$ or $\underline{ba} \looparrowright^{+}\underline{ab}$, in any case this cyclical inclusion resolves.
\end{ex}
In the following, we describe the algorithm of cyclical completion in which we add some new cyclical reductions. We denote by $\Re^{+}$ the  rewriting system with the added cyclical reductions and we add ``$+$'' in $\looparrowright^{+}$ for each cyclical reduction that is added in the process of cyclical completion. We assume that  $\Re$ is a finite, complete, reduced and cyclically terminating  rewriting system. The algorithm is described in the following.\\
$(i)$ If there are no cyclical overlaps or cyclical inclusions, then $\Re$ is cyclically
confluent, from Theorem \ref{theo:conf_over_inc_amb_resolv} and $\Re^{+}=\Re$.\\
$(ii)$ Assume there is a cyclical overlap or a cyclical inclusion in the word $w$:
$ w \looparrowright z_{1}$ and $ w \looparrowright z_{2}$.\\
 - if $z_{1}$ and $z_{2}$ are cyclically irreducible, then let decide $z_{1}\looparrowright^{+} z_{2}$ or $z_{2}\looparrowright^{+} z_{1}$.
     If at a former step, no $z_{i}\looparrowright^{+} u$ or $u\looparrowright^{+} z_{i}$ for $i=1,2$ was added, then we can decide arbitrarily wether $z_{1}\looparrowright^{+} z_{2}$ or $z_{2}\looparrowright^{+} z_{1}$.
     As an example, if $z_{1}\looparrowright^{+} u$ was added, then we choose $z_{2}\looparrowright^{+} z_{1}$.\\
    - if $z_{1}$ or $z_{2}$ is not cyclically irreducible, then
     we reduce them to their cyclically irreducible form $z'_{1}$ and $z'_{2}$
     and we decide wether $z'_{1}\looparrowright^{+} z'_{2}$ or $z'_{2}\looparrowright^{+} z'_{1}$ in the same way as before.\\
    The algorithm fails if the addition of a new cyclical reduction creates a contradiction:  assume  $z_{1}$ and $z_{2}$ are cyclically irreducible and we need to add $z_{1}\looparrowright^{+} z_{2}$ or $z_{2}\looparrowright^{+} z_{1}$ but   $z_{1}\looparrowright^{+} u$ and  $z_{2}\looparrowright^{+} v$ are already in $\Re^{+}$. In the Knuth-Bendix algorithm of completion, the addition of the new rules may create some additional overlap or inclusion ambiguities. We show in the following that this is not the case with the algorithm of cyclical completion and this is due to the fact that the relation $\looparrowright$ is not compatible with concatenation. From lemma \ref{lem_uloopvimplies conj}, if $u \looparrowright^{*} v$ then $u\equiv_{M} v$. In the following lemma, we show that this holds also with  $\looparrowright^{+}$.
\begin{lem}\label{lem_new_cycl_rule_equiv}
Let $\Re$ be a complete, reduced and cyclically terminating rewriting system. We
assume that $\Re$ is finite.
Let $\Re^{+}$ be the cyclical rewriting system obtained from the application of the algorithm of cyclical completion on $\Re$. If $u \looparrowright^{+} v$ then $u \equiv_{M} v$ modulo $\Re$.
\end{lem}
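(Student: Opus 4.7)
The plan is to argue by induction on the stage of the cyclical completion algorithm at which a given reduction enters $\Re^{+}$. Index the stages so that $\looparrowright_{0}$ is the original relation $\looparrowright$ arising from $\Re$, and $\looparrowright_{i}$ is obtained from $\looparrowright_{i-1}$ by adjoining the single new cyclical reduction $z'_{1} \looparrowright^{+} z'_{2}$ (or its reverse) that the algorithm produces at step $i$ from some cyclical overlap or cyclical inclusion on a word $w$. Since $\Re$ is finite, only finitely many such stages occur, and $\looparrowright^{+}$ is the union of all the $\looparrowright_{i}$. It suffices to prove, by induction on $i$, that whenever $u \looparrowright_{i} v$ we have $u \equiv_{M} v$, and then close under reflexivity and transitivity using the fact that $\equiv_{M}$ is an equivalence relation.

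For the base case $i=0$, a single step $u \looparrowright v$ is exactly of the form covered by Lemma \ref{lem_uloopvimplies conj}, which yields $u \equiv_{M} v$ directly. For the inductive step, suppose the result is known for $\looparrowright_{i-1}$. If $u \looparrowright_{i} v$ is not the newly added reduction, then $u \looparrowright_{i-1} v$ and we are done by the inductive hypothesis. Otherwise, by the description of the algorithm, the step $u \looparrowright_{i} v$ is $z'_{1} \looparrowright^{+} z'_{2}$ (up to direction), obtained from a word $w$ with $w \looparrowright_{i-1}^{*} z'_{1}$ and $w \looparrowright_{i-1}^{*} z'_{2}$ (chaining the original cyclical reductions $w \looparrowright z_{1}$, $w \looparrowright z_{2}$ with the reductions to cyclically irreducible forms $z_{j} \looparrowright^{*} z'_{j}$).

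By the inductive hypothesis applied stepwise along each of these chains, $w \equiv_{M} z'_{1}$ and $w \equiv_{M} z'_{2}$. Since $\equiv_{M}$ is symmetric and transitive, $z'_{1} \equiv_{M} z'_{2}$, which is exactly $u \equiv_{M} v$. This completes the induction for single steps; the statement for arbitrary $u \looparrowright^{+} v$ then follows by concatenating steps along a derivation and invoking transitivity of $\equiv_{M}$.

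The only subtle point, which I would flag but not belabor, is to verify that the intermediate words $z_{j}$ and $z'_{j}$ used by the algorithm at stage $i$ are produced by reductions that are already in $\looparrowright_{i-1}$ (so that the inductive hypothesis genuinely applies); this is built into the algorithm's description, which reduces to cyclically irreducible form using the current system before adjoining the new rule. Given this, the argument is essentially bookkeeping on top of Lemma \ref{lem_uloopvimplies conj} together with the equivalence properties of $\equiv_{M}$, and no new ambiguity-type analysis is required.
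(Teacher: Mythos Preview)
Your proposal is correct and follows essentially the same approach as the paper: for each added rule $z'_{1}\looparrowright^{+}z'_{2}$ one exhibits a common ancestor $w$ with $w\looparrowright^{*}z'_{1}$ and $w\looparrowright^{*}z'_{2}$, invokes Lemma~\ref{lem_uloopvimplies conj} to get $w\equiv_{M}z'_{1}$ and $w\equiv_{M}z'_{2}$, and then uses symmetry and transitivity of $\equiv_{M}$; the multi-step case is closed under transitivity. Your explicit staging by induction on $i$ is a mild refinement---the paper simply asserts that the reductions $w\looparrowright^{*}z'_{j}$ take place in the original $\Re$ (which is how the algorithm is set up), whereas your version would still go through even if later stages used earlier added rules---but the substance of the argument is identical.
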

\begin{proof}
There are two cases to check: if $u \looparrowright^{+} v$ and if $u \looparrowright^{+}u_{2} \looparrowright^{+}u_{3}.. \looparrowright^{+} v$.
 If $u \looparrowright^{+} v$, then from the algorithm of cyclical completion, there is a word $w$ such that $w \looparrowright^{*} u$ and $w \looparrowright^{*} v$. So, $w \equiv_{M} u $ and $w \equiv_{M} v $ modulo $\Re$ from lemma \ref{lem_uloopvimplies_transp} and since $\equiv_{M}$ is an equivalence relation, $u \equiv_{M} v$ modulo $\Re$. If $u \looparrowright^{+}u_{2} \looparrowright^{+}u_{3}.. u_{k} \looparrowright^{+} v$, then from the first case we have $u \equiv_{M} u_{2}$, $u_{2} \equiv_{M} u_{3}$, ...,$u_{k} \equiv_{M} v$ modulo $\Re$. So, from the transitivity of $\equiv_{M}$, $u \equiv_{M} v$ modulo $\Re$.
\end{proof}

Given two complete, reduced and cyclically terminating rewriting systems $\Re$ and $\Re'$, we say that $\Re$ and $\Re'$ are \emph{cyclically equivalent} if the following condition holds: $u\equiv_{M} v$ modulo $\Re'$ if and only if $u\equiv_{M} v$ modulo $\Re$. We show that the cyclical rewriting system  $\Re^{+}$ obtained  from the application of the algorithm of cyclical completion on $\Re$ is cyclically equivalent to $\Re$.

\begin{lem}
Let $\Re$ be a complete, reduced and cyclically terminating  rewriting system, we assume that $\Re$ is finite.
Let $\Re^{+}$ be the cyclical  rewriting system obtained from the application of the algorithm of cyclical completion on $\Re$. Then $\Re^{+}$ and $\Re$ are cyclically equivalent, that is $u\equiv_{M} v$ modulo $\Re^{+}$ if and only if $u\equiv_{M} v$ modulo $\Re$.
\end{lem}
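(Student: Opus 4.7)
My plan is to prove the cyclical equivalence by establishing each direction in turn. The forward direction, that $u \equiv_{M} v$ modulo $\Re$ implies $u \equiv_{M} v$ modulo $\Re^{+}$, is essentially immediate: the cyclical-completion algorithm only \emph{adds} cyclical reductions and never removes a rewriting rule, so any witnesses $x,y$ for $ux =_{M} xv$ and $yu =_{M} vy$ obtained via $\Re$ remain valid witnesses in $\Re^{+}$, since every $\Re$-derivation is a fortiori an $\Re^{+}$-derivation.

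For the converse direction, suppose $u \equiv_{M} v$ modulo $\Re^{+}$. I would unpack this into a finite chain of elementary steps linking $u$ and $v$ in the enlarged system, and then translate each step back into a relation holding modulo $\Re$. Steps that use only the original rules of $\Re$ are already $=_{M}$-equalities (hence also $\equiv_{M}$-relations) modulo $\Re$. The remaining steps are applications of some added cyclical reduction $z_{1} \looparrowright^{+} z_{2}$, and for each of these Lemma \ref{lem_new_cycl_rule_equiv} gives exactly $z_{1} \equiv_{M} z_{2}$ modulo $\Re$. Because $\equiv_{M}$ modulo $\Re$ is an equivalence relation, chaining these per-step translations together yields $u \equiv_{M} v$ modulo $\Re$, as required.

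The main subtlety I expect to negotiate is that $\looparrowright^{+}$ is not compatible with concatenation, so an added cyclical reduction cannot be applied in the interior of a larger word in the way an ordinary rewriting rule can. I would address this by first fixing precisely what a ``witness of $u \equiv_{M} v$ modulo $\Re^{+}$'' looks like: it must factor through a finite sequence $u = u_{0}, u_{1}, \ldots, u_{n} = v$ in which each consecutive pair $u_{i}, u_{i+1}$ is linked either by the congruence $=_{M}$ of the original monoid (allowing arbitrary $\Re$-rewriting inside the word) or by a top-level application of $\looparrowright^{+}$. With this structural clarification in hand, Lemma \ref{lem_new_cycl_rule_equiv} applies link by link and the transitivity of $\equiv_{M}$ modulo $\Re$ closes the argument without ever having to insert a cyclical step inside a larger context.
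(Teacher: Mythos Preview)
The paper's proof is a one-liner and much simpler than your approach. By definition, $u \equiv_{M} v$ means there exist words $x,y$ with $ux =_{M} xv$ and $yu =_{M} vy$, and the relation $=_{M}$ is the monoid congruence generated solely by the ordinary (``linear'') rewriting rules $l\to r$. The cyclical completion algorithm adds only cyclical reductions $z_{1}\looparrowright^{+}z_{2}$; it adds no new rewriting rules. Hence $=_{M}$ modulo $\Re^{+}$ and $=_{M}$ modulo $\Re$ are literally the same relation, and therefore so is $\equiv_{M}$. Both implications are immediate, and this is exactly what the paper says.

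Your converse argument rests on a misreading of the definition. You propose to ``unpack'' the hypothesis $u\equiv_{M}v$ modulo $\Re^{+}$ into a chain $u=u_{0},u_{1},\ldots,u_{n}=v$ whose links are either $=_{M}$-steps or top-level applications of $\looparrowright^{+}$, and then to invoke Lemma~\ref{lem_new_cycl_rule_equiv} link by link. But $u\equiv_{M}v$ is not defined by any such chain from $u$ to $v$; it is the bare existence of conjugating words $x,y$ satisfying $ux=_{M}xv$ and $yu=_{M}vy$. The added cyclical reductions never enter the picture, because they are not rules contributing to the congruence $=_{M}$ and, as you yourself note, cannot be applied inside a larger word such as $ux$ or $xv$. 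So the ``subtlety'' you anticipate does not arise, and the detour through chains and Lemma~\ref{lem_new_cycl_rule_equiv} is unnecessary. Your forward direction is fine, but the actual reason both directions hold is simply that the underlying monoid equality $=_{M}$ is unchanged.
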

\begin{proof}
  It holds that $u\equiv_{M} v$ modulo $\Re$ if and only if there are words $x,y$ in $\Sigma^{*}$ such that $ux=_{M}xv$ and $yu=_{M}vy$.  Since the (linear) rules in $\Re^{+}$ are the same as those in $\Re$, this holds if and only if $u\equiv_{M} v$ modulo $\Re^{+}$ also.
\end{proof}
 We say that there is a \emph{cyclical ambiguity} in $w$ if
$w\looparrowright^{*}u$ and $w\looparrowright^{*}v$, where $u$ and $v$ are not cyclic conjugates.
If there exist cyclically conjugate words $z$ and $z'$ in $\Sigma^{*}$ such that
$u\looparrowright^{*}z$ and $v\looparrowright^{*}z'$,  then we say that this cyclical ambiguity \emph{resolves}. Clearly,  a rewriting system is cyclically confluent if and only if all the cyclical ambiguities resolve. Now, we show that whenever the algorithm of cyclical completion does not fail, the rewriting system obtained $\Re^{+}$ is cyclically complete.
\begin{prop}\label{thm_proof_algo_compl}
Let $\Re$ be a complete, reduced and cyclically terminating  rewriting system, we
assume that $\Re$ is finite.
Let $\Re^{+}$ be the cyclical rewriting system obtained from the application of the algorithm of cyclical completion on $\Re$. Then $\Re^{+}$ is cyclically complete.
\end{prop}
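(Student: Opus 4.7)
The plan is to show that $\Re^{+}$ is cyclically terminating and cyclically confluent, and then conclude. Termination comes first because the natural tool for confluence (the analog of Claim \ref{claim_local_conf}) needs it as a hypothesis.

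For cyclical termination of $\Re^{+}$, I would analyze any $\looparrowright^{+}$-sequence starting at $u$. Every original $\looparrowright$-step is a step of $\Re$, and since $\Re$ is cyclically terminating, any maximal initial segment of original steps is finite, ending at some word $z$. Each added arrow has both endpoints cyclically irreducible modulo $\Re$ by construction, so after such an arrow is used no original $\looparrowright$-step can apply (its source would have to be cyclically reducible). Hence the sequence decomposes as a finite $\looparrowright$-prefix followed by a chain of added arrows through cyclically irreducible forms. The non-failure condition of the algorithm caps the out-degree at $1$, so this added-arrow chain is deterministic, and because $\Re$ is finite there are only finitely many cyclical overlaps and inclusions, hence finitely many added arrows. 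Reading the arbitration rule as forbidding new arrows out of a node that already sits on an arrow forces the added-arrow graph to be a finite out-forest, ruling out cycles and guaranteeing termination.

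For cyclical confluence, I would first invoke the analog of Claim \ref{claim_local_conf} for $\Re^{+}$ (its proof is the Newman-style argument, unchanged) to reduce to local cyclical confluence. Given $w\looparrowright^{+}u_{1}$ and $w\looparrowright^{+}u_{2}$, I split by the type of each step. If both are original $\looparrowright$-steps, the underlying triple $(w,r_{1},r_{2})$ is either $\widetilde{c}$-defined, in which case Lemma \ref{lem_one_step} supplies cyclically conjugate common descendants; or it is not, in which case Lemma \ref{lem_pref_overlap} exhibits a cyclical overlap or inclusion that the algorithm has processed — both $u_{1}$ and $u_{2}$ cyclically reduce modulo $\Re$ to the cyclically irreducible forms $z_{1},z_{2}$ that the algorithm joined with an added arrow, so they share a common $\looparrowright^{+*}$-descendant. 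If one step is original and the other is added, $w$ would be simultaneously cyclically reducible and cyclically irreducible modulo $\Re$, a contradiction, so this case is vacuous. If both steps are added, the out-degree-$1$ condition forces $u_{1}=u_{2}$. Local cyclical confluence follows, and combined with termination yields confluence, completing the proof that $\Re^{+}$ is cyclically complete.

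The main obstacle I anticipate is the termination step: the failure condition as literally written only bounds the out-degree of the added-arrow graph, and one must justify that the algorithm's arbitration of directions precludes directed cycles among cyclically irreducible forms. I would make this precise by strengthening the non-failure bookkeeping to record, for each cyclically irreducible form, whether it has ever been touched by an added arrow, and showing that any admissible choice of direction produces a DAG (equivalently, an out-forest), so the deterministic added-arrow chain is a simple path in a finite acyclic graph and therefore terminates. Once termination is secured, the confluence analysis above is essentially a clean case split leveraging the earlier lemmas.
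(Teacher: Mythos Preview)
Your confluence argument is essentially the paper's own proof, just unpacked more explicitly: the paper simply asserts that the algorithm resolves all cyclical overlaps and inclusions (your ``both original, not $\widetilde{c}$-defined'' case), and then rules out new ambiguities by the same two observations you make --- an added arrow cannot coexist with an original rule at the same source because the source is cyclically irreducible, and two added arrows from the same source are excluded by the non-failure condition. Your invocation of Lemma~\ref{lem_one_step} for the $\widetilde{c}$-defined subcase and Lemma~\ref{lem_pref_overlap} for the complementary one is the right bookkeeping; the paper suppresses it under the word ``clearly''.

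Where you differ is that you treat cyclical termination of $\Re^{+}$ as a separate step to be proved, whereas the paper's proof begins with ``We need to show that $\Re^{+}$ is cyclically confluent'' and never returns to termination at all. Your decomposition --- a finite $\looparrowright$-prefix followed by a chain of added arrows among cyclically irreducible words --- is correct, and your worry about acyclicity of the added-arrow graph is legitimate: the failure condition literally only bounds out-degree, and one must read the arbitration rule (``if $z_{1}\looparrowright^{+}u$ was added, then we choose $z_{2}\looparrowright^{+}z_{1}$'') as systematically directing new arrows toward already-touched nodes, which is what forces the out-forest structure. The paper leaves this entirely implicit. So your proof is not merely a restatement of the paper's: it fills a gap the paper glosses over, and your explicit identification of the acyclicity issue is exactly the point that would need to be spelled out in a fully rigorous version.
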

\begin{proof}
 We need to show that $\Re^{+}$ is cyclically confluent.
Clearly, by the application of the algorithm of cyclical completion on $\Re$ the cyclical overlaps and inclusions in $\Re$ are resolved. So, it remains to show that the addition of the new cyclical rules in $\Re^{+}$ does not create a cyclical ambiguity.  If a cyclical ambiguity occurs, then there should be one of the following kind of rules in $\Re^{+}$:\\
- $u \looparrowright^{+} v$ and $l\rightarrow x$, where $l\bumpeq u$.\\
- $u \looparrowright^{+} v$ and $l \looparrowright^{+} x$, where $l\bumpeq u$.\\
The first case cannot occur, since $u$ is cyclically irreducible modulo $\Re$ and the second case cannot occur, since in this case the algorithm of cyclical completion fails.
\end{proof}
\section{Length-preserving rewriting systems}
We say that a rewriting system $\Re$ is
\emph{length-preserving} if $\Re$ satisfies the condition that
the left-hand sides of rules have the same length as their
corresponding right-hand sides. We show that if $\Re$ is a length-preserving
 rewriting system, then an infinite sequence of cyclical reductions occur only if there is a
 repetition of some word in the sequence or if a word and its cyclic conjugate occur there. Using this fact, we define an equivalence relation on the words that permits us to obtain some partial results in the case that  $\Re$ is not cyclically terminating.
\begin{lem}\label{lem_length_termin}
Let $\Re$ be a complete,
reduced  rewriting system that is length-preserving. Let
 $u\looparrowright u_{1}\looparrowright u_{2}...\looparrowright u_{n}$ be a sequence such that $u_{i}$ and $u_{j}$ are not cyclic conjugates in
$\Sigma^{*}$ for $i\neq j$.
Then this sequence terminates.
\end{lem}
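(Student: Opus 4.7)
The plan is to argue by contradiction. Suppose we had a sequence $u \looparrowright u_{1} \looparrowright u_{2} \looparrowright \cdots$ of cyclical reductions in which no two terms are cyclic conjugates, and suppose it were infinite; I will derive a contradiction by showing that the $u_{i}$'s must lie in a finite pool of $\bumpeq$-classes.

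The first step is to observe that length is preserved along the sequence. Each step $u_{i} \looparrowright u_{i+1}$ unfolds as $u_{i} \circlearrowleft^{k} \widetilde{u_{i}} \rightarrow u_{i+1}$; cyclic conjugation obviously preserves length, and since $\Re$ is length-preserving each rule $l \rightarrow r$ satisfies $|l|=|r|$, so the single reduction step preserves length as well. A direct induction then yields $|u_{i}|=|u|$ for every $i$; write $m=|u|$.

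The second step is a pigeonhole argument. Since $\Sigma$ is finite (this is the paper's standing context of finitely presented monoids), there are only $|\Sigma|^{m}$ words of length $m$ in $\Sigma^{*}$, and hence only finitely many $\bumpeq$-classes of such words. The hypothesis that $u_{i} \not\bumpeq u_{j}$ for $i \neq j$ forces the $u_{i}$'s to represent pairwise distinct $\bumpeq$-classes, so there can be only finitely many of them, i.e. the sequence terminates.

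There is really no serious obstacle here: the lemma is a direct pigeonhole consequence of length-preservation, and the write-up will be short. The only minor point worth flagging is the role of the finiteness of $\Sigma$; over an infinite alphabet one would instead have to argue that only finitely many letters can appear across the sequence (since $u$ has finite support and each step introduces at most the letters occurring in one right-hand side), but this refinement is unnecessary in the setting the paper is working in.
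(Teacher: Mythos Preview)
Your argument is correct and matches the paper's own proof: both observe that length-preservation forces every $u_{i}$ to have length $\ell(u)$, and then apply pigeonhole over the finitely many words (equivalently, $\bumpeq$-classes) of that length, using the hypothesis that no two $u_{i}$ are cyclic conjugates. The contradiction framing and the remark about finiteness of $\Sigma$ are cosmetic additions; the substance is identical.
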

\begin{proof}
 From the assumption,
applying $\Re$ on $u$ does not change its length $\ell(u)$, so we have that
\emph{$\ell(u_{i})= \ell(u)$} for all $i$. Since the number of
words of length \emph{$\ell(u)$} is finite and there is no occurrence of words and their cyclic conjugates  in the sequence, the sequence terminates.
\end{proof}
Note that using the same argument as in lemma \ref{lem_length_termin}, we have that if  $\Re$ is length-decreasing, that is  all the left-hand sides of rules have  length greater than their
corresponding right-hand sides, then there is no infinite sequence of cyclical reductions, that is $\Re$ is cyclically terminating. In the following lemma, we show  that if there is an infinite sequence of cyclical reductions that results from the occurrence of a word $w$ and its cyclic conjugate $\widetilde{w}$, then there are some relations of commutativity involving $w$ and $\widetilde{w}$.
This is not clear if these relations of commutativity are a sufficient condition for the occurrence of an infinite sequence, nor if such a sufficient condition can be found.
\begin{lem}
Assume there is an infinite sequence $w \looparrowright^{*}\widetilde{w}$,
where $w \bumpeq \widetilde{w}$.
Then there are words $x,y$ such that $yx\widetilde{w}=_{M}\widetilde{w}yx$ and
 $xyw=_{M}wxy$.
\end{lem}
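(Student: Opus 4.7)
The plan is to extract from the chain $w \looparrowright^{*}\widetilde{w}$ two explicit conjugating words $A,B \in \Sigma^{*}$, one witnessing right-conjugacy and one witnessing left-conjugacy of $w$ and $\widetilde{w}$ in $M$, and then to observe that $AB$ commutes with $w$ while $BA$ commutes with $\widetilde{w}$. Setting $x=A$ and $y=B$ will give the two claimed identities.

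First I would unpack a single cyclical reduction. If $u \looparrowright u_{+}$ via $u \circlearrowleft^{k} u' \rightarrow u_{+}$, write $u=\alpha\beta$ with $|\alpha|=k$, so that $u'=\beta\alpha$. Then the two \emph{free monoid} identities
\[
u\,\alpha \;=\; \alpha\beta\alpha \;=\; \alpha\,u', \qquad \beta\,u \;=\; \beta\alpha\beta \;=\; u'\,\beta
\]
together with $u'=_{M}u_{+}$ give $u\alpha =_{M} \alpha u_{+}$ and $\beta u =_{M} u_{+}\beta$. So every cyclical-reduction step automatically supplies one right-conjugator ($\alpha$) and one left-conjugator ($\beta$).

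Next I would iterate along the chain $w=u_{0}\looparrowright u_{1}\looparrowright\cdots\looparrowright u_{n}=\widetilde{w}$ underlying the hypothesis, recording at step $i$ the pair $(\alpha_{i},\beta_{i})$ produced above. Setting
\[
A \;=\; \alpha_{0}\alpha_{1}\cdots\alpha_{n-1}, \qquad B \;=\; \beta_{n-1}\cdots\beta_{1}\beta_{0},
\]
a direct induction on $n$ (telescoping the single-step identities) yields
\[
w\,A \;=_{M}\; A\,\widetilde{w}, \qquad B\,w \;=_{M}\; \widetilde{w}\,B.
\]
Combining the two gives $w\,AB =_{M} A\,\widetilde{w}\,B =_{M} AB\,w$ and $BA\,\widetilde{w} =_{M} B\,w\,A =_{M} \widetilde{w}\,BA$. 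Taking $x=A$ and $y=B$ delivers $xyw=_{M}wxy$ and $yx\widetilde{w}=_{M}\widetilde{w}yx$ as required.

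The proof is essentially bookkeeping, so I do not foresee a serious obstacle; the only point that demands care is the order of composition — $A$ accumulates the $\alpha_{i}$ left-to-right while $B$ accumulates the $\beta_{i}$ right-to-left, which is exactly what is needed for the two telescoping inductions to close up. Note also that the hypothesis that the full cyclical-reduction sequence is \emph{infinite} is not used directly in the argument; it enters only as the motivation for the assumption $w\bumpeq\widetilde{w}$, since (as in Lemma \ref{lem_length_termin}) an infinite cyclical sequence at bounded length forces the occurrence of such a pair $(w,\widetilde{w})$.
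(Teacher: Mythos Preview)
Your argument is correct and follows essentially the same route as the paper: obtain a right-conjugator and a left-conjugator witnessing $w\equiv_{M}\widetilde{w}$ from the chain $w\looparrowright^{*}\widetilde{w}$, then combine the two identities to get the commutations $xyw=_{M}wxy$ and $yx\widetilde{w}=_{M}\widetilde{w}yx$. The only difference is cosmetic: the paper simply invokes Lemma~\ref{lem_uloopvimplies conj} to produce $x,y$ with $wx=_{M}x\widetilde{w}$ and $yw=_{M}\widetilde{w}y$, whereas you unpack that lemma inline and construct $A,B$ explicitly from the cyclic-shift prefixes and suffixes along the chain.
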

\begin{proof}
From lemma \ref{lem_uloopvimplies conj},  $w \equiv_{M} \widetilde{w}$, that is  there are words $x,y$ in $\Sigma^{*}$ such that $wx=_{M}x\widetilde{w}$ and $yw=_{M}\widetilde{w}y$. So, $wxy=_{M}x\widetilde{w}y=_{M}xyw$ and $yx\widetilde{w}=_{M}ywx=_{M}\widetilde{w}yx$.
\end{proof}

We now define the following equivalence relation $\sim$ on $\Sigma^{*}$. Let $u,v$ be different words in $\Sigma^{*}$.
 We define $u \sim v$ if and only if $u\looparrowright^{*}v$ and
 $v\looparrowright^{*}u$, where at least one rule from $\Re$ has been applied in these sequences.
 Clearly, the relation $\sim$ is symmetric and transitive. In order to make it reflexive,
 we define $u\sim u$, that is $u\looparrowright^{*}u$ in an empty
 way and also $u\sim \widetilde{u}$, where $u\bumpeq \widetilde{u}$.
 Clearly, if $\Re$ is cyclically terminating, then each
 equivalence class contains a single word, up to $\bumpeq$.
Now, we show that there is  a  partial solution to the left and right conjugacy problem, using the equivalence relation $\sim$ in the case that $\Re$ is not cyclically terminating. Note that given a word $w$ such  that $\operatorname{Allseq}$$(w)$ does not terminate, it may occur one of the following; either there is no cyclically irreducible form achieved in $\operatorname{Allseq}$$(w)$ (as in Ex. \ref{ex_not_termin_no_cyc_irred}) or there is a unique cyclically irreducible form achieved in $\operatorname{Allseq}$$(w)$ (as in  Ex. \ref{ex_braid_uniquecyclicalform}).
 \begin{thm}
 Let  $u$ and $v$ be in $\Sigma^{*}$. If there exists a word $z$ such that $u \sim z$ and $v \sim z$ then $u \equiv_{M} v$.
 \end{thm}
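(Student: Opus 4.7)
The plan is to show that the equivalence relation $\sim$ refines $\equiv_{M}$; once that is established, the theorem follows immediately from the transitivity of $\equiv_{M}$. The work consists of checking each of the three clauses appearing in the definition of $\sim$ and verifying that $a \sim b$ implies $a \equiv_{M} b$.

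First I would dispose of the trivial clauses. If $a \sim b$ because $a = b$ in $\Sigma^{*}$, then $a \equiv_{M} b$ holds trivially by taking $x = y = 1$. If $a \sim b$ because $a \bumpeq b$, then there exist $p, q \in \Sigma^{*}$ with $a = pq$ and $b = qp$, so $a$ and $b$ are transposed; this is the hypothesis of Lemma \ref{lem_uloopvimplies_transp} applied to the one-step sequence $a \circlearrowleft^{i} b$, and by the chain of implications culminating in Proposition \ref{prop_samerho_conj} (or directly from the definitions, since $a \cdot q =_{M} q \cdot b$ and $p \cdot a =_{M} b \cdot p$), we obtain $a \equiv_{M} b$.

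Next I would handle the main clause: $a \sim b$ via $a \looparrowright^{*} b$ and $b \looparrowright^{*} a$. Here I invoke Lemma \ref{lem_uloopvimplies conj} directly to each of the two cyclical reduction sequences and conclude $a \equiv_{M} b$ (only one direction is actually needed, since $\equiv_{M}$ is an equivalence relation). Combining the three cases, $\sim$ is contained in $\equiv_{M}$.

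With that lemma in hand, the theorem is a two-line consequence. From $u \sim z$ we obtain $u \equiv_{M} z$, and from $v \sim z$ we obtain $v \equiv_{M} z$; by symmetry, $z \equiv_{M} v$, and by transitivity of $\equiv_{M}$, we conclude $u \equiv_{M} v$. There is no serious obstacle: the whole content is already encoded in Lemma \ref{lem_uloopvimplies_transp} and Lemma \ref{lem_uloopvimplies conj}, and the only point requiring a brief check is the case analysis on the clauses of $\sim$, which amounts to verifying that the added reflexive and cyclic-conjugate relations do not escape $\equiv_{M}$.
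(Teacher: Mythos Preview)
Your proposal is correct and follows essentially the same route as the paper: derive $u \equiv_{M} z$ and $v \equiv_{M} z$ from the hypothesis via Lemma~\ref{lem_uloopvimplies conj}, then conclude by transitivity of $\equiv_{M}$. You are slightly more careful than the paper in explicitly verifying the reflexive and cyclic-conjugate clauses of $\sim$, which the paper's one-line proof absorbs into ``from the definition of $\sim$''; this extra case analysis is harmless and arguably makes the argument cleaner.
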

 \begin{proof}
 If there exists a word $z$ such that $u \sim z$ and $v \sim z$, then from the definition of $\sim$ there are sequences $u \looparrowright^{*} z $ and $v \looparrowright^{*}z$. So, $u \equiv_{M} z$ and $v \equiv_{M} z$, from lemma \ref{lem_uloopvimplies conj} and since $\equiv_{M}$ is an equivalence relation $u \equiv_{M} v$.
 \end{proof}
Note that the converse is not true as the following example illustrates it.
\begin{ex}
Let  $\Re=\{ bab\rightarrow aba,ba^{n}ba\rightarrow aba^{2}b^{n-1}, n \geq 2\}$.
It holds that $a\equiv_{M}b$, since $a(aba)=_{M}(aba)b$ and
$(aba)a=_{M}b(aba)$. Yet, there is no sequence of cyclical reductions such that $a \sim b$. This example is due to Patrick Dehornoy.
\end{ex}
We can consider a  rewriting system that is not length increasing (that is  all the rules preserve or decrease the length) to be \emph{cyclically terminating up to $\sim$} and apply on it the algorithm of cyclical completion and obtain that it is \emph{cyclically complete up to $\sim$}. This is due to the fact that  also in this case infinite cyclical sequences would result from the occurrence of a word and its cyclic conjugate.  If there exists a cyclically irreducible form then it is unique, but the existence of a cyclically irreducible form is not ensured. Indeed, let consider the complete and finite rewriting system $\Re$ from Ex. \ref{ex_not_cyc_confluent}. It holds that  $\Re$ is not length increasing and not cyclically terminating, since there are infinite sequences of cyclical reductions (as an example $\Delta a \rightarrow b \Delta \circlearrowleft ^{1} \Delta b \rightarrow a \Delta$). By applying the algorithm of cyclical completion on $\Re$, we have $\Re^{+}=\Re \cup \{\underline{ab} \looparrowright^{+} \underline{ba}\}$ is cyclically complete up to $\sim$, but yet   $\Delta a$ has no cyclically irreducible form.

\bibliographystyle{amsplain}
\bibliography{bibliographie_15Juillet2009}

\end{document}